\DeclareMathOperator{\Mp}{Mp}
\DeclareMathOperator{\Z}{\mathbb{Z}}
\DeclareMathOperator{\R}{\mathbb{R}}
\DeclareMathOperator{\C}{\mathbb{C}}
\DeclareMathOperator{\Q}{\mathbb{Q}}
\renewcommand{\H}{\mathbb{H}}
\DeclareMathOperator{\Aut}{Aut}
\DeclareMathOperator{\e}{\mathfrak{e}}
\DeclareMathOperator{\new}{new}
\DeclareMathOperator{\cusp}{cusp}
\DeclareMathOperator{\Log}{Log}
	\newtheorem{Satz}{Satz}[section]
	\newtheorem{Theorem}[Satz]{Theorem}
	\newtheorem{Lemma}[Satz]{Lemma}
	\newtheorem{Proposition}[Satz]{Proposition} 
	\newtheorem{Corollary}[Satz]{Corollary}
	\theoremstyle{definition}
	\newtheorem{Remark}[Satz]{Remark}
\date{\today}
\author{Jan Hendrik Bruinier}
\address{Fachbereich Mathematik, Technische Universit\"at Darmstadt, Schlossgartenstra{\ss}e 7, D--64289 Darmstadt, Germany}
\email{bruinier@mathematik.tu-darmstadt.de}
\author{Markus Schwagenscheidt}
\address{Mathematical Institute, University of Cologne, Weyertal 86-90, D--50931 Cologne, Germany}
\email{mschwage@math.uni-koeln.de}
\title[Converse Theorem]{A converse theorem for Borcherds products on $X_{0}(N)$}
\thanks{Both authors are partially supported by the DFG Research Unit FOR 1920 \lq Symmetry, Geometry and Arithmetic\rq \, and the LOEWE Reseach Unit USAG. The second author is also supported by the SFB-TRR 191 \lq Symplectic Structures in Geometry, Algebra and Dynamics\rq, funded by the DFG}
\begin{document} 

\begin{abstract}
	We show that every Fricke invariant meromorphic modular form for $\Gamma_{0}(N)$ whose divisor on $X_0(N)$ is defined over $\Q$ and supported on Heegner divisors and the cusps is a generalized Borcherds product associated to a harmonic Maass form of weight $1/2$. Further, we derive a criterion for the finiteness of the multiplier systems of generalized Borcherds products in terms of the vanishing of the central derivatives of $L$-function of certain weight~$2$ newforms. We also prove similar results for twisted Borcherds products.
\end{abstract}

\maketitle
	
\section{Introduction}

Borcherds' singular theta correspondence, constructed in his famous paper \cite{borcherds}, yields a multiplicative lifting map from weakly holomorphic modular forms of weight $1-n/2$ for the Weil representation associated to an even lattice $L$ of signature $(2,n)$ to meromorphic modular forms for the orthogonal group of $L$. The modular forms arising in this way have particular infinite product expansions at the cusps and are therefore called Borcherds products. It is an important question, raised by Borcherds in \cite{borcherds}, Problem~16.10, whether there is a converse theorem for Borcherds products, i.e., whether every meromorphic modular form for the orthogonal group of $L$ whose divisor is a linear combination of Heegner divisors can be obtained as a Borcherds product of a weakly holomorphic modular form. If $n \geq 2$, then the best known general result in this direction states that the converse theorem holds if $L$ splits a hyperbolic plane and a rescaled hyperbolic plane over $\Z$, see \cite{bruinierconversetheorem}. We also refer to \cite{bruinierhabil}, Theorem~5.12, and \cite{bruinierfreitag,bruinierfunkeinjectivity,heimmurase} for related results. On the other hand, if $L$ has signature $(2,1)$, then there are counter-examples to the converse theorem. They arise from additional relations between Heegner divisors which are implied by the Gross-Zagier formula, but which cannot be obtained as Borcherds products of weakly holomorphic modular forms. In the present paper we investigate the failure of the converse theorem in signature $(2,1)$ in detail for the group $\Gamma_{0}(N)$. First, we prove a weak converse theorem which states that every meromorphic modular form for $\Gamma_{0}(N)$ whose divisor on $Y_0(N)$ is a linear combination of Heegner divisors and whose cuspidal divisor is defined over $\Q$ and Fricke-invariant is the generalized Borcherds product of a \emph{harmonic Maass form} of weight $1/2$. The generalized Borcherds product of a harmonic Maass form transforms with a multiplier system which may be of infinite order. Our second main result gives a criterion for the finiteness of the multiplier system in terms of the vanishing of the central derivatives of the $L$-functions of certain newforms of weight $2$. Let us describe our results in more detail. 

\subsection{A weak converse theorem} Let $N$ be a positive integer. We let $Y_{0}(N)=\Gamma_0(N)\backslash \H$ be the modular curve corresponding to $\Gamma_{0}(N)$ and denote by $X_0(N)$ its compactification by the cusps.  
% Further, we let $\Gamma_{0}^{*}(N)$ be the extension of $\Gamma_{0}(N)$ by the Fricke involution $W_{N} = \left(\begin{smallmatrix}0 & -1 \\ N & 0 \end{smallmatrix} \right)$. 
By a meromorphic modular form of real weight $\kappa \in \R$ for $\Gamma_{0}(N)$ with a unitary multiplier system $\sigma : \Gamma_{0}(N) \to S^{1}$ we mean a meromorphic function $F: \H \to \C$ which transforms as
\begin{align*}
F\left(Mz\right) = \sigma(M)(cz+d)^{\kappa}F(z)
\end{align*}
for all $z \in \H$ and $M \in \Gamma_{0}(N)$, and which is meromorphic at the cusps. Here we let $z^{a} = \exp(a\Log(z))$ for $z,a \in \C$, where $\Log$ denotes the principal branch of the logarithm. We say that $\sigma$ has finite order if there is a positive integer $m$ such that $\sigma(M)^{m} = 1$ for all $M \in \Gamma_{0}(N)$.

We recall some basic facts about vector valued harmonic Maass forms for the Weil representation from \cite{bruinierfunke04}. Let $L_{N} = \Z(N) \oplus U$ where $\Z(N)$ is the lattice $\Z$ equipped with the quadratic form $Q(x) = Nx^{2}$, and $U$ is the hyperbolic plane $\Z^{2}$ with $Q(x,y) = xy$. It has signature $(2,1)$ and level $4N$. Write $L_{N}'$ for the dual of $L_{N}$. The discriminant group $D_{N} = L_{N}'/L_{N}$ is isomorphic as a finite quadratic module to $\Z/2N\Z$ with the finite quadratic form $Q(\gamma) = \gamma^{2}/4N$, and we will use this identification without further notice. The hermitian symmetric domain associated with $L_{N}$ can be identified with the complex upper half-plane, and modular forms for the discriminant kernel of the orthogonal group of $L_{N}$ can be viewed as classical elliptic modular forms for $\Gamma_{0}(N)$. Let $\C[D_{N}]$ be the group ring of $L_{N}$ with standard basis vectors $\e_{\gamma}$ for $\gamma \in D_{N}$. 
%Let $\Mp_{2}(\Z)$ be the integral metaplectic group, realized as the set of pairs $(M,\phi)$ with $M = \left(\begin{smallmatrix}a & b \\ c & d \end{smallmatrix} \right) \in \SL_{2}(\Z)$ and $\phi:\H \to \C$ a holomorphic function with $\phi(\tau)^{2} = c\tau + d$. 
The Weil representation $\rho_{N}$ associated with $L_{N}$ is a unitary representation of the integral metaplectic group $\Mp_{2}(\Z)$ on the group ring $\C[D_{N}]$, see \cite{borcherds}. We let $\overline{\rho}_{N}$ denote the corresponding dual Weil representation. A smooth function $f: \H \to \C[D_{N}]$ is called a harmonic Maass form of weight $k \in \frac{1}{2}+\Z$ for $\rho_{N}$ if it is annihilated by the weight $k$ %invariant 
Laplace operator $\Delta_{k}$, if it transforms under $\Mp_2(\Z)$ like a modular form of weight $k$ for $\rho_{N}$, and if it is at most of linear exponential growth at the cusp $\infty$. The antilinear differential operator
\begin{align*}
\xi_{k}f = 2iv^{k}\overline{\frac{\partial}{\partial \overline{\tau}}f} \qquad (\tau = u+iv \in \H) 
\end{align*}
maps harmonic Maass forms of weight $k$ for $\rho_{N}$ to weakly holomorphic modular forms of weight $2-k$ for $\overline{\rho}_{N}$. We let $H_{k,\rho_{N}}^{+}$ be the space of all harmonic Maass forms of weight $k$ for $\rho_{N}$ which are mapped to the space $S_{2-k,\overline{\rho}_{N}}$ of cusp forms of weight $2-k$ for $\overline{\rho}_{N}$, and we let $M_{k,\rho_{N}}^{!}$ be the subspace of weakly holomorphic modular forms. Every $f \in H_{k,\rho_{N}}^{+}$ has a Fourier expansion of the form
\begin{align*}
f(\tau) = \sum_{\gamma \in D_{N}}\left(\sum_{\substack{n \in \Z\\ n \gg -\infty}}a_{f}^{+}(n,\gamma)e\left(\frac{n\tau}{4N}\right)+\sum_{\substack{n \in \Z \\ n < 0}}a_{f}^{-}(n,\gamma)\Gamma\left(1-k,\frac{\pi|n|v}{N}\right)e\left(\frac{n\tau}{4N}\right)\right)\e_{\gamma},
\end{align*}
with $e(z) = e^{2\pi i z}$ for $z \in \C$, coefficients $a_{f}^{\pm}(n,\gamma) \in \C$ and $\Gamma(s,x) = \int_{x}^{\infty}e^{-t}t^{s-1}dt$ the incomplete gamma function. Note that the transformation behaviour under $\rho_{N}$ implies that $a_{f}^{\pm}(n,-\gamma) = (-1)^{k-1/2}a_{f}^{\pm}(n,\gamma)$ for all $n \in \Z, \gamma \in D_{N},$ and $a_{f}^{\pm}(n,\gamma) =0$ unless $n \equiv \gamma^{2}(4N)$. The finite Laurent polynomial
\[
P_{f}(\tau) = \sum_{\gamma \in D_{N}}\sum_{\substack{n \in \Z\\ n \leq 0}}a_{f}^{+}(n,\gamma)e\left(\frac{n\tau}{4N}\right)\e_{\gamma}
\]
is called the principal part of $f$. We emphasize that the principal part includes the coefficients with $n = 0$.

Next, we explain the generalized Borcherds products associated with harmonic Maass forms, as  defined in \cite{bruinierono}, Section 6. Let $f \in H_{1/2,\rho_{N}}^{+}$ and assume that the coefficients $a_{f}^{+}(n^{2},n)$ are real for all $n$, and that the coefficients $a_{f}^{+}(n,\gamma)$ for $n < 0, \gamma \in D_{N},$ are integers. Then by Theorem~6.1 of \cite{bruinierono} the infinite product
\[
\Psi(f,z) = e(\rho_{f}z)\prod_{n = 1}^{\infty}(1-e(nz))^{a_{f}^{+}(n^{2},n)}
\]
converges for $\Im(z) \gg 0$ large enough and has a continuation to all of $\H$ which is a meromorphic modular form of weight $a_{f}^{+}(0,0)$ for $\Gamma_{0}(N)$ with a unitary multiplier system (possibly of infinite order). Here $\rho_{f} \in \R$ is the Weyl vector at the cusp $\infty$ associated with $f$, which can be computed in terms of the Petersson inner product of $f$ with a unary theta series of weight $1/2$. 
%It is well known that if $f \in M_{1/2,\rho_{N}}^{!}$ is a weakly holomorphic modular form, then the multiplier system of the associated Borcherds product $\Psi(f,z)$ has finite order, but this is in general not true for if $f$ is a harmonic Maass form. The main aim of the present note is to describe those $f \in H_{1/2,\rho_{N}}^{+}$ whose generalized Borcherds product transforms with a multiplier system of finite order.
The divisor on $Y_0(N)$ of the generalized Borcherds product $\Psi(f,z)$ is given in terms of the principal part of $f$ by the linear combination
\[
Z(f) = \sum_{\gamma \in D_{N}}\sum_{n < 0}a_{f}^{+}(n,\gamma)Z(n,\gamma),
\]
where $Z(n,\gamma)$ is the Heegner divisor defined in \cite{bruinierono}, Section 5. The cuspidal part of the divisor of $f$ can explicitly be described in terms of the Weyl vectors   associated to $f$ at the different cusps. What is important to us is the fact that if we represent the cusps of $\Gamma_{0}(N)$ by fractions $\frac{a}{c}$ with $a,c \in \Z_{>0}, (a,c)  =1$ and $c \mid N$, then the order of $\Psi(f,z)$ at $\frac{a}{c}$ only depends on $c$, but not on $a$ (compare \cite{bruinierschwagenscheidt}, Section~5). It is well known that the cusp $\frac{a}{c}$ is defined over the cyclotomic field $\Q(\zeta_{(c,N/c)})$ and that the cusps $\frac{a}{c}$ with fixed $c$ form a complete Galois orbit. Consequently, the cuspidal part of the divisor of $\Psi(f,z)$ is defined over $\Q$. Furthermore, the divisor of $\Psi(f,z)$ is invariant under the Fricke involution $W_{N} = \left(\begin{smallmatrix}0 & -1 \\ N & 0 \end{smallmatrix}\right)$. Our first main result is the following weak converse theorem.

\begin{Theorem}\label{weak converse theorem}
	Let $F$ be a meromorphic modular form for $\Gamma_{0}(N)$ with a unitary multiplier system (possibly of infinite order). Suppose that the divisor of $F$ on $Y_0(N)$ is a linear combination of Heegner divisors $Z(n,\gamma)$ and that its divisor at the cusps is defined over $\Q$ and invariant under the Fricke involution $W_{N}$. Then $F$ is (up to a non-zero constant factor) the generalized Borcherds product associated to a unique harmonic Maass form $f \in H_{1/2,\rho_{N}}^{+}$.
\end{Theorem}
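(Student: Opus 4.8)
The plan is to exploit that the generalized Borcherds lift $f \mapsto \Psi(f,\cdot)$ is additive in $f$ and that the divisor of $\Psi(f,\cdot)$ on all of $X_0(N)$ is controlled by the principal part of $f$ together with the Weyl vectors of $f$ at the cusps, and thereby to reduce the theorem to the construction of a suitable $f \in H^+_{1/2,\rho_N}$. Concretely, suppose we have produced $f \in H^+_{1/2,\rho_N}$ such that $\Psi(f,\cdot)$ has weight $a_f^+(0,0)$ equal to the weight $\kappa$ of $F$ and the same divisor as $F$ on all of $X_0(N)$, cusps included. Then $G := F/\Psi(f,\cdot)$ is a meromorphic modular form of weight $0$ for $\Gamma_0(N)$ with some unitary multiplier system and with empty divisor on $X_0(N)$; hence $\log|G|$ descends to a bounded harmonic function on the compact Riemann surface $X_0(N)$, so it is constant, and $G$ is a nonzero constant. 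Uniqueness of $f$ follows similarly: if $\Psi(f_1,\cdot)$ and $\Psi(f_2,\cdot)$ agree up to a constant, then $f_1-f_2 \in H^+_{1/2,\rho_N}$ has constant Borcherds product, which forces $a_{f_1}^+(n^2,n)=a_{f_2}^+(n^2,n)$ for $n\geq 1$ and $a_{f_1}^+(0,0)=a_{f_2}^+(0,0)$; combined with the fact that $M_{1/2,\rho_N}$ is spanned by unary theta series, whose relevant coefficients and Weyl vectors are explicitly known, this gives $f_1=f_2$.

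It remains to construct $f$. Write $\operatorname{div}_{Y_0(N)}(F) = \sum_{n<0,\gamma}c(n,\gamma)Z(n,\gamma)$ with integers $c(n,\gamma)=c(n,-\gamma)$, and for each $c \mid N$ let $b_c \in \Q$ be the common order of $F$ at the cusps $\frac{a}{c}$; by the hypotheses these data are well defined and the tuple $(b_c)_{c\mid N}$ satisfies the Fricke relation $b_c=b_{N/c}$. I would look for $f$ with $a_f^+(0,0)=\kappa$, $a_f^+(n,\gamma)=c(n,\gamma)$ for $n<0$, and with the remaining coefficients $a_f^+(0,\gamma)$ for $\gamma\neq 0$ chosen so that the Weyl vectors of $\Psi(f,\cdot)$ at the cusps of $\Gamma_0(N)$ reproduce $(b_c)_{c\mid N}$. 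The existence of a harmonic Maass form with a prescribed principal part is governed by the Bruinier--Funke duality (see \cite{bruinierfunke04}): such an $f \in H^+_{1/2,\rho_N}$ exists if and only if the candidate principal part $P$ pairs to zero with every cusp form in $S_{3/2,\overline\rho_N}$ under the pairing $\{P,g\}$ extracting the constant term of $P\cdot g$. So two things must be checked: (a) this orthogonality, and (b) that the free coefficients $(a_f^+(0,\gamma))_{\gamma\neq 0}$ can be used to hit any admissible cuspidal order tuple.

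For (a), I would pass through the Eichler--Zagier--Skoruppa isomorphism together with Atkin--Lehner theory to identify $S_{3/2,\overline\rho_N}$ with a space of weight-$2$ cusp forms on $\Gamma_0(N)$, reduce to a single Hecke eigenform $g$ of weight $2$, and invoke the Gross--Kohnen--Zagier theorem in Borcherds' form, i.e. the modularity of the generating series of Heegner divisors in $\operatorname{Jac}(X_0(N))$ together with the known contribution of the cuspidal divisors. This identifies $\{P,g\}$, up to nonzero factors, with the image of the divisor $\sum_{n<0,\gamma}c(n,\gamma)Z(n,\gamma)+\sum_{c\mid N}b_c\cdot(\text{cusps of denominator }c)$ in the $g$-isotypic part of $\operatorname{Jac}(X_0(N))\otimes\C$. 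But that divisor is, up to a torsion class absorbing both the cuspidal divisor class group and the multiplier system of $F$, the divisor of the meromorphic function $F$ on $X_0(N)$, hence trivial in $\operatorname{Jac}(X_0(N))\otimes\Q$; it is precisely the $\Q$-rationality and Fricke-invariance hypotheses that allow the cuspidal contribution to be treated this way. Thus $\{P,g\}=0$ for every eigenform $g$, hence for all of $S_{3/2,\overline\rho_N}$.

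The step I expect to be the main obstacle is (b), the bookkeeping at the cusps. One needs the explicit formula for the order of $\Psi(f,\cdot)$ at $\frac{a}{c}$ in terms of $a_f^+(0,\gamma)$ and Heegner-type sums (as in \cite{bruinierschwagenscheidt} and \cite{bruinierono}), and then a linear-algebra argument showing that, modulo the constraints already imposed on the $Y_0(N)$-divisor, the map $(a_f^+(0,\gamma))_{\gamma\neq 0}\mapsto(\text{cuspidal orders})$ surjects onto the subspace of admissible tuples — equivalently, that the only obstructions to realizing a cuspidal divisor by a generalized Borcherds product beyond those coming from $Y_0(N)$ are $\Q$-rationality and Fricke-invariance. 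This is plausible because these are exactly the constraints distinguishing arbitrary cuspidal divisors from those realized by modular units, and generalized Borcherds products (unlike the weakly holomorphic ones underlying \cite{borcherds}) can carry the non-unit part through an infinite-order multiplier system. Compatibility of the weight $a_f^+(0,0)=\kappa$ and of the Weyl vector at $\infty$ with the remaining freedom, checked against the valence formula on $X_0(N)$, then closes the argument.
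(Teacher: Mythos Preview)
Your reduction step and uniqueness sketch are sound, and step (b) --- adjusting the cuspidal orders by linear algebra --- is essentially what the paper does. But step (a) rests on a misconception that makes your plan both harder and partly circular.

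You assert that $f\in H^+_{1/2,\rho_N}$ with prescribed principal part $P$ exists if and only if $\{P,g\}=0$ for every $g\in S_{3/2,\overline\rho_N}$. That is the obstruction for \emph{weakly holomorphic} forms, not for harmonic Maass forms. For $f\in H^+_{1/2,\rho_N}$, Stokes' theorem gives $\{P_f,g\}=(g,\xi_{1/2}f)$, and since $\xi_{1/2}$ surjects onto $S_{3/2,\overline\rho_N}$ this pairing can take any value; there is no constraint. Concretely, Proposition~3.11 of \cite{bruinierfunke04} produces, for any prescribed negative-index part, an $f\in H^+_{1/2,\rho_N}$ realizing it, with the constant term free only up to $M_{1/2,\rho_N}$. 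So the Gross--Kohnen--Zagier argument you outline is unnecessary --- and it would not address the actual constraint anyway, which lives in the constant term modulo the theta space, not in a pairing with cusp forms.

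The paper exploits exactly this freedom and thereby bypasses both difficulties you flag. First choose $f_1\in H^+_{1/2,\rho_N}$ matching the divisor of $F$ on $Y_0(N)$ (no obstruction). The residual divisor of $\Psi(f_1,\cdot)/F$ is then cuspidal, $\Q$-rational, and Fricke-invariant, hence lies in a space of dimension $\tfrac12(\sigma_0(N)+\delta_{N=\square})$. This equals the dimension of the span of the divisors of the eta products $\eta(dz)\eta(Nz/d)$ for $d\mid N$, which are precisely the Borcherds lifts of the theta basis of $M_{1/2,\rho_N}$ (Corollary~\ref{corollary eta products}). Hence some $f_2\in M_{1/2,\rho_N}$ corrects the cusps, and $f=f_1-f_2$ does the job. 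Note also that the paper does \emph{not} arrange $a_f^+(0,0)=\kappa$ in advance as you propose: instead it shows a posteriori that the quotient $\Psi(f,\cdot)/F$ has weight $0$, by observing that $G=2\log\bigl(\Im(z)^{k/2}|\Psi(f,\cdot)/F|\bigr)$ is in $L^2(X_0(N))$ with $\Delta_0 G=k$, so self-adjointness forces $k=0$. Your claim that $\log|G|$ is bounded harmonic already presupposes $k=0$, which you have not justified.
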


\begin{Remark}
	The supplement \lq weak\rq \, indicates that $f$ is a harmonic Maass form, rather than a weakly holomorphic modular form.
\end{Remark}

%The idea of the proof is simple. First of all, it is well known that we can find a harmonic Maass form $f_{1} \in H_{1/2,\rho_{N}}^{+}$ whose associated generalized Borcherds product $\Psi(f_{1},z)$ has as its divisor on $\H$ the same linear combination of Heegner divisors as $F$. 
%%It is uniquely determined up to addition of a holomorphic modular form in $M_{1/2,\rho_{N}}$.
% Using a basis of the latter space constructed from unary theta functions, we can find a Borcherds product $\Psi(f_{2},z)$ with $f_{2} \in M_{1/2,\rho_{N}}$ whose divisor is supported at the cusps and matches the divisor of $\Psi(f_{1},z)/F$. In particular, the function $\Psi(f,z)/F$ with $f = f_{1}+f_{2}$ is a holomorphic modular form without any zeros in $\H$ and at the cusps, which easily implies that it is a constant. 
It is well known that any linear combination of Heegner divisors $Z(n,\gamma)$ can be realized as the divisor on $Y_0(N)$ of the generalized Borcherds product associated to a suitable harmonic Maass form $f \in H_{1/2,\rho_{N}}^{+}$ of weight $1/2$. The main point of the above theorem is that we can choose $f$ in such a way that its associated generalized Borcherds product also has the correct cuspidal divisor. We construct this $f$ by considering the Borcherds products of suitable unary theta series of weight $1/2$. We refer to Section~\ref{section weak converse theorem} for a detailed proof of the theorem. 

\subsection{Generalized Borcherds products with multiplier systems of finite order} Next, we want to describe those harmonic Maass forms $f$ whose associated generalized Borcherds products $\Psi(f,z)$ transform with multiplier systems of finite order. Using transcendence results of Waldschmidt, W\"ustholz and Scholl  (see e.g.~\cite{waldschmidt}, \cite{wuestholz}, \cite{scholl}) for periods of differentials of the third kind, it was proved in \cite{bruinierono} that the finiteness of the multiplier system is equivalent to the rationality of certain coefficients of the holomorphic part of $f$, see Theorem~\ref{theorem bruinierono1} below. Furthermore, if we are in the special situation that $g = \xi_{1/2}f \in S_{3/2,\overline{\rho}_{N}}$ is a newform, then an application of the Gross-Zagier formula shows that the rationality of these coefficients of $f$ is equivalent to the vanishing of the central derivative $L'(G,1)$ of the $L$-function of the weight $2$ newform $G$ for $\Gamma_{0}(N)$ corresponding to $g$ under the Shimura correspondence, see Theorem~\ref{theorem bruinierono2} below. We will give a similar criterion for the finiteness of the multiplier system of the generalized Borcherds product associated to an arbitrary harmonic Maass form $f \in H_{1/2,\rho_{N}}^{+}$, which does not necessarily map to a newform under $\xi_{1/2}$.

The space $S_{3/2,\overline{\rho}_{N}}$ is isomorphic to the space $J_{2,N}^{\cusp}$ of Jacobi cusp forms of weight $2$ and index $N$, see \cite{eichlerzagier}, Theorem~5.1. There is an extensive theory of Hecke operators and newforms for Jacobi forms which carries over to vector valued modular forms. In particular, there is a Hecke operator $T_{n}$ acting on $S_{3/2,\overline{\rho}_{N}}$ for every positive integer $n$, and the space $S_{3/2,\overline{\rho}_{N}}^{\new}$ of newforms for $\overline{\rho}_{N}$ is defined. By the Shimura correspondence it is isomorphic as a module over the Hecke algebra to the space $S_{2}^{\new,-}(N)$ of newforms of weight $2$ for $\Gamma_{0}(N)$ on which the Fricke involution acts with eigenvalue $+1$. The results of \cite{skoruppazagier} yield a direct sum decomposition
\begin{align}\label{eq direct sum decomposition}
S_{3/2,\overline{\rho}_{N}} = \bigoplus_{\substack{d,\ell > 0 \\ d^{2}\ell \mid N}}S_{3/2,\overline{\rho}_{N/d^{2}\ell}}^{\new}|U_{d}V_{\ell}
\end{align}
where $U_{d}: S_{3/2,\overline{\rho}_{N/d^{2}}} \to S_{3/2,\overline{\rho}_{N}}$ and $V_{\ell}: S_{3/2,\overline{\rho}_{N/\ell}} \to S_{3/2,\overline{\rho}_{N}}$ are the translations to vector valued modular forms of the usual index raising operators on Jacobi forms, compare Section~\ref{section weak converse theorem} below. If $G \in S_{2}^{\new,-}(N/m)$ for some $m \mid N$ is a newform and $g \in S_{3/2,\overline{\rho}_{N/m}}^{\new}$ is the newform corresponding to $G$ under the Shimura correspondence, then the cusp forms $g|U_{d}V_{\ell}$ with $d^{2}\ell = m$ form a basis for the space of those forms in $S_{3/2,\overline{\rho}_{N}}$ which have the same eigenvalues as $G$ under all Hecke operators $T_{n}$ with $(n,N) = 1$. We call this space the $G$-isotypical component of $S_{3/2,\overline{\rho}_{N}}$.

By subtracting a suitable multiple of a unary theta function from $f$ we can assume that $a_{f}^{+}(0,0) =0$. Then the associated generalized Borcherds product $\Psi(f,z)$ has weight $0$ and its divisor has degree $0$. Let $J$ be the Jacobian of $X_{0}(N)$. For a number field $F$, we let $J(F)$ be its points over $F$. They correspond to divisors of degree $0$ on $X_{0}(N)$ which are defined over $F$. We define a degree $0$ divisor corresponding to $Z(f)$ by putting
\[
y(f) = Z(f)-\deg(Z(f))\cdot\infty \in J(\Q).
\]
Note that $y(f)$ and the divisor of the generalized Borcherds product $\Psi(f,z)$ differ by a degree $0$ divisor supported at the cusps. By the Manin-Drinfeld theorem, $y(f)$ and the divisor of $\Psi(f,z)$ define the same point in $J(\Q) \otimes \R$. Our second main result  is the following criterion for the finiteness of the multiplier systems of generalized Borcherds products.

%\begin{Theorem}[Strong Converse Theorem]\label{strong converse theorem}
%	Let $F$ be a meromorphic modular form for $\Gamma_{0}(N)$ with a unitary multiplier system (possibly of infinite order). Suppose that the divisor of $F$ on $\H$ is a linear combination of Heegner divisors $Z(n,\gamma)$ and that its divisor at the cusps is defined over $\Q$ and invariant under the Fricke involution $W_{N}$. Let $f \in H_{1/2,\rho_{N}}^{+}$ be the unique harmonic Maass form whose generalized Borcherds product is a multiple of $F$. Assume that $f$ is orthogonal to cusp forms and has rational principal part. Then the following statements are equivalent.
%	\begin{enumerate}
%		\item The multiplier system of $F$ has finite order.
%		\item The divisor $y(f)$ is torsion in the Jacobian of $X_{0}(N)$.
%		\item The coefficients $a_{f}^{+}(n^{2},n)$ are rational for all $n \in \Z$.
%		\item The coefficients $a_{f}^{+}(n^{2},n)$ are algebraic for all $n \in \Z$.
%		\item We have
%		\[
%		\left(\xi_{1/2}f,g\right)L'(G,1)= 0
%		\]
%		for every newform $G \in S_{2}^{\new,-}(N/m)$ for $m\mid N$ and every cusp form $g$ in the $G$-isotypical component of $S_{3/2,\overline{\rho}_{N}}$. Here $(\cdot,\cdot)$ denotes the Petersson inner product on $S_{3/2,\overline{\rho}_{N}}$.
%		\end{enumerate}
%\end{Theorem}

\begin{Theorem}\label{finiteness theorem}
	Let $f \in H_{1/2,\rho_{N}}^{+}$ be a harmonic Maass form with real coefficients $a_{f}^{+}(n,\gamma)$ for $n \in \Z,\gamma \in D_{N}$, and integral coefficients $a_{f}^{+}(n,\gamma)$ for $n < 0, \gamma \in D_{N}$. Further assume that $a_{f}^{+}(0,0) = 0$, $f$ is orthogonal to cusp forms with respect to the regularized Petersson inner product, and the principal part of $f$ is defined over $\Q$. Then the following statements are equivalent.
	\begin{enumerate}
		\item The multiplier system of the Borcherds product $\Psi(f,z)$ has finite order.
		\item The divisor $y(f)$ is torsion in the Jacobian of $X_{0}(N)$.
		\item The coefficients $a_{f}^{+}(n^{2},n)$ are rational for all $n \in \Z$.
		\item The coefficients $a_{f}^{+}(n^{2},n)$ are algebraic for all $n \in \Z$.
		\item We have
		\[
		\left(\xi_{1/2}f,g\right)L'(G,1)= 0
		\]
		for every newform $G \in S_{2}^{\new,-}(N/m)$ for $m\mid N$ and every cusp form $g$ in the $G$-isotypical component of $S_{3/2,\overline{\rho}_{N}}$. Here $(\cdot,\cdot)$ denotes the Petersson inner product on $S_{3/2,\overline{\rho}_{N}}$.
		\end{enumerate}
\end{Theorem}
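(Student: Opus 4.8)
The plan is to establish the equivalences in blocks: the elementary equivalence $(1)\Leftrightarrow(2)$; the equivalences $(1)\Leftrightarrow(3)\Leftrightarrow(4)$, which rest on the transcendence theory of periods of differentials of the third kind; and the arithmetically deep equivalence $(2)\Leftrightarrow(5)$, which is in essence the Gross--Zagier formula. Throughout we use the following standing facts. Since $a_{f}^{+}(0,0)=0$, the generalized Borcherds product $\Psi(f,z)$ has weight $0$; its divisor on $X_{0}(N)$ equals $Z(f)+C$ with $C$ supported on the cusps; both parts are defined over $\Q$ (the Heegner part because the $Z(n,\gamma)$ are $\Q$-rational divisors and the $a_{f}^{+}(n,\gamma)$ with $n<0$ are integers, the cuspidal part by the discussion in the introduction); and $\deg(Z(f)+C)=0$.

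For $(1)\Leftrightarrow(2)$ I argue directly. If the multiplier system $\sigma$ of $\Psi(f,z)$ has order $m$, then $\Psi(f,z)^{m}$ descends to a meromorphic function on $X_{0}(N)$, so $m\cdot\operatorname{div}(\Psi(f,z))$ is principal and $\operatorname{div}(\Psi(f,z))$ is torsion in $J$. Conversely, if $m\cdot\operatorname{div}(\Psi(f,z))=\operatorname{div}(h)$ for a rational function $h$ on $X_{0}(N)$, then $\Psi(f,z)^{m}/h$ is holomorphic and nowhere vanishing on $\H$, of order $0$ at every cusp, and transforms under $\Gamma_{0}(N)$ with multiplier $\sigma^{m}$ and trivial automorphy factor; choosing a holomorphic logarithm on the simply connected domain $\H$ and using boundedness forces $\sigma^{m}=1$. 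Finally $\operatorname{div}(\Psi(f,z))-y(f)=C+\deg(Z(f))\cdot\infty$ is a degree-$0$ divisor supported on the cusps, hence torsion in $J(\Q)$ by the Manin--Drinfeld theorem, so $\operatorname{div}(\Psi(f,z))$ is torsion if and only if $y(f)$ is.

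The implication $(3)\Rightarrow(4)$ is trivial, and the remaining equivalences among $(1)$, $(3)$, $(4)$ follow from the arguments of Bruinier--Ono (Theorem~\ref{theorem bruinierono1}): the logarithmic derivative of $\Psi(f,z)$ is a differential of the third kind on $X_{0}(N)$ with $\Q$-rational residue divisor, whose periods and $q$-expansion coefficients are governed by the $a_{f}^{+}(n^{2},n)$ and the Weyl vector $\rho_{f}$, and the transcendence results of Waldschmidt, Wüstholz and Scholl for such periods show that finite order of $\sigma$, rationality of all $a_{f}^{+}(n^{2},n)$, and algebraicity of all $a_{f}^{+}(n^{2},n)$ are equivalent; none of this uses that $\xi_{1/2}f$ is a newform, so it applies here verbatim. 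For the Galois-descent step underlying $(4)\Rightarrow(3)$ it is convenient to record that the rationality of the principal part of $f$, together with the identity $\{f,h\}=(\xi_{1/2}f,h)$ expressing the Bruinier--Funke pairing as a finite sum in the principal part of $f$, forces $\xi_{1/2}f\in S_{3/2,\overline{\rho}_{N}}$ to be defined over $\Q$.

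The core of the theorem is $(2)\Leftrightarrow(5)$. Decompose $J_{0}(N)$ up to isogeny over $\Q$ under the Hecke algebra into an Eisenstein part, whose group of $\Q$-points is finite, a part isogenous to products of the $G$-isotypical factors $A_{G}$ over newforms $G\in S_{2}^{\new,-}(N/m)$, $m\mid N$, and a part built from newforms with Fricke eigenvalue $-1$; because $\operatorname{div}(\Psi(f,z))$ is $W_{N}$-invariant, $y(f)$ projects trivially to the last part, so $y(f)$ is torsion if and only if its projection $\pi_{G}(y(f))\in A_{G}(\Q)\otimes\R$ vanishes for every such $G$. By the modularity of the generating series of Heegner divisors in $J(\Q)\otimes\C$ (the vector-valued form of the Gross--Kohnen--Zagier theorem, following from Borcherds' work), $y(f)$ is the value of the Bruinier--Funke pairing of $f$ against this series, and projecting to the $G$-isotypical part and expanding in the basis $\{g_{0}|U_{d}V_{\ell}:d^{2}\ell=m\}$ of the $G$-isotypical component of $S_{3/2,\overline{\rho}_{N}}$ (with $g_{0}$ the Shimura lift of $G$) gives
\[
\pi_{G}(y(f))=\sum_{d^{2}\ell=m}\bigl(\xi_{1/2}f,\,g_{0}|U_{d}V_{\ell}\bigr)\,P_{G,d,\ell},
\]
where the $P_{G,d,\ell}$ span the $W_{N}$-invariant part of $A_{G}(\Q)\otimes\R$. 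By the Gross--Zagier formula and its Gross--Kohnen--Zagier refinement (with Waldspurger's theorem used to match the occurring periods), the Gram matrix of the canonical heights of the $P_{G,d,\ell}$ is a positive multiple of $L'(G,1)$ times a fixed positive-definite matrix. Hence if $L'(G,1)=0$ then all $P_{G,d,\ell}$, and therefore $\pi_{G}(y(f))$, are torsion, while if $L'(G,1)\neq0$ the $P_{G,d,\ell}$ are linearly independent in $A_{G}(\Q)\otimes\R$ and $\pi_{G}(y(f))=0$ exactly when $(\xi_{1/2}f,g)=0$ for every $g$ in the $G$-isotypical component; in both cases $\pi_{G}(y(f))$ is torsion if and only if $(\xi_{1/2}f,g)L'(G,1)=0$ for all such $g$, and conjoining over all $G$ yields $(2)\Leftrightarrow(5)$.

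I expect the main obstacle to be this last block: making the vector-valued Gross--Kohnen--Zagier input precise enough to identify $\pi_{G}(y(f))$ with the displayed combination of Heegner points — in particular keeping track of the oldform contributions $g_{0}|U_{d}V_{\ell}$ and of the associated degeneracy maps $J_{0}(N/m)\to J_{0}(N)$ onto the $W_{N}$-invariant part of the $G$-isotypical factor — and verifying the positive-definiteness of the height Gram matrix so that $L'(G,1)\neq0$ genuinely forces independence of the $P_{G,d,\ell}$. A related point requiring care is matching the Hecke-module decomposition \eqref{eq direct sum decomposition} on the weight $3/2$ side with the isogeny decomposition of $J_{0}(N)$ on the weight $2$ side, so that orthogonality of $\xi_{1/2}f$ to the $G$-isotypical component in $(5)$ corresponds exactly to the vanishing of $\pi_{G}(y(f))$ in $(2)$. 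The remaining implications are comparatively soft.
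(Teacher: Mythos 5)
Your treatment of $(1)\Leftrightarrow(2)$ and of $(1)\Leftrightarrow(3)\Leftrightarrow(4)$ agrees with the paper, which likewise disposes of these by the Manin--Drinfeld argument and by citing Theorem~\ref{theorem bruinierono1}. The core equivalence is where you diverge, and where your argument has a genuine gap. You propose to prove $(2)\Leftrightarrow(5)$ directly on the Jacobian: project $y(f)$ to the $G$-isotypical factor, write $\pi_{G}(y(f))=\sum_{d^{2}\ell=m}(\xi_{1/2}f,g_{0}|U_{d}V_{\ell})\,P_{G,d,\ell}$ via modularity of the generating series of Heegner divisors, and then assert that the N\'eron--Tate height Gram matrix of the points $P_{G,d,\ell}$ equals $L'(G,1)$ times a fixed positive-definite matrix. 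That last assertion is doing all the work in both directions ($(5)\Rightarrow(2)$ needs each $P_{G,d,\ell}$ to be torsion when $L'(G,1)=0$; $(2)\Rightarrow(5)$ needs their linear independence in $A_{G}(\Q)\otimes\R$ when $L'(G,1)\neq0$), and it is not an available off-the-shelf result: it amounts to a Gross--Kohnen--Zagier theorem for the \emph{oldform} constituents $g_{0}|U_{d}V_{\ell}$ at arbitrary (non-squarefree) level $N$, including the matching of the $\sigma_{0}(m)$ degeneracy copies of $A_{G}$ inside $J_{0}(N)$ with the smaller-dimensional $G$-isotypical component of $S_{3/2,\overline{\rho}_{N}}$. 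You flag this yourself as ``the main obstacle,'' but it is not a technical loose end --- it is of comparable depth to the theorem being proved, and without it neither implication goes through.

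The paper circumvents exactly this difficulty by staying on the weight $1/2$ side and only ever invoking the Gross--Zagier input in the already-established \emph{newform} case (Theorem~\ref{theorem bruinierono2}, i.e.\ Theorems~7.6 and~7.8 of \cite{bruinierono}). Concretely, it proves $(4)\Rightarrow(5)$: one splits $f=\sum_{i}f_{i}$ according to the Hecke eigenspaces of $\xi_{1/2}f$, with each $f_{i}$ still having algebraic principal part (Lemmas~\ref{lemmainnerproducts}--\ref{lemma basis splitting}); a linear-system argument with the operators $T_{p}$, $(p,N)=1$, shows that algebraicity of the $a_{f}^{+}(n^{2},n)$ descends to each $f_{i}$; and then, writing $\xi_{1/2}f_{i}$ in terms of a newform $g$ of level $N/m$ and inspecting the coefficient of index $(d_{0}^{2},d_{0})$ for the minimal $d_{0}$, one transports algebraicity to $a_{f_{g}}^{+}(1,1)$ for the preimage $f_{g}$ of $\|g\|^{-2}g$, at which point the newform criterion gives $L'(G,1)=0$. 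If you want to salvage your route, you would need to either prove the height formula for the $P_{G,d,\ell}$ (essentially redoing Gross--Kohnen--Zagier for the old part) or, more economically, adopt the paper's reduction so that only the newform case of the $L'$-criterion is ever needed.
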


\begin{Remark}
	\begin{enumerate}
		\item It is easy to see that we can write every $f \in H_{1/2,\rho_{N}}^{+}$ with real cofficients in the holomorphic part and integral coefficients $a_{f}^{+}(\gamma,n)$ for $n < 0,\gamma \in D_{N},$ as $f = f_{1} + f_{2}$ with $f_{1} \in H_{1/2,\rho_{N}}^{+}$ satisfying the conditions required in the theorem, and $f_{2} \in M_{1/2,\rho_{N}}$ a holomorphic modular form. From the facts that $M_{1/2,\rho_{N}}$ has a basis consisting of unary theta series (see Lemma~\ref{lemma basis unary theta series}) whose Borcherds products are given by explicit eta products (see Corollary~\ref{corollary eta products}) we see that the Borcherds product $\Psi(f_{2},z)$ transforms with a multiplier system of finite order if and only if all of the coefficients of $f_{2}$ are rational. Combining this with the theorem we obtain a criterion for the finiteness of the multiplier system of the generalized Borcherds product $\Psi(f,z)$, without the additional assumptions on $f$.
		\item The equivalence of (1) and (2) is clear by the remarks preceeding the theorem, and the equivalence of (2), (3) and (4) is the statement of Theorem~6.2 in \cite{bruinierono} (which we recall in Theorem~\ref{theorem bruinierono1} below). We included all these equivalent statements for the convenience of the reader. The main point of the theorem is the equivalence of (1) and (5), which will be proved in Section~\ref{section finiteness theorem}.
		\item By the results of \cite{skoruppazagier}, the space $S_{3/2,\overline{\rho}_{N}}\cong J_{2,N}^{\cusp}$ is isomorphic as a module over the Hecke algebra to the cuspidal part of a certain subspace $\mathcal{M}_{2}^{-}(N)$ of the space of holomorphic modular forms of weight $2$ for $\Gamma_{0}(N)$ which have eigenvalue $+1$ under the Fricke involution $W_{N}$. If we denote this isomorphism by $\varphi_{N}: S_{3/2,\overline{\rho}_{N}} \stackrel{\sim}{\to} \mathcal{M}_{2}^{-}(N)$, then the above theorem implies that if $\Psi(f,z)$ transforms with a multiplier system of finite order, then
		\[
		L'(\varphi_{N}(\xi_{1/2}f),1) = 0.
		\] 
		However, the vanishing of this $L$-derivative is not equivalent to item (5) in the above theorem.
	\end{enumerate}
\end{Remark}

\subsection{A converse theorem for twisted Borcherds products}

In \cite{bruinierono}, Theorem 6.1, the authors also defined certain twisted generalized Borcherds products $\Psi_{\Delta,r}(f,z)$ of harmonic Maass forms $f \in H_{1/2,\tilde{\rho}_{N}}^{+}$, where $\Delta$ is a fundamental discriminant and $r \in \Z/2N\Z$ satisfies $r^{2} \equiv \Delta (4N)$, and $\tilde{\rho}_{N}$ is either $\rho_{N}$ or $\overline{\rho}_{N}$, depending on whether $\Delta > 0$ or $\Delta < 0$. For brevity, we do not repeat the definition of these twisted Borcherds products here but refer the reader to \cite{bruinierono}. If $\Delta \neq 1$, then $\Psi_{\Delta,r}(f,z)$ is a meromorphic modular form of weight $0$ for $\Gamma_{0}(N)$ with a unitary multiplier system (possibly of infinite order). The divisor of $\Psi_{\Delta,r}(f,z)$ is supported on $Y_0(N)$ and is given by the linear combination
\[
Z_{\Delta,r}(f) = \sum_{\gamma \in D_{N}}\sum_{n < 0}a_{f}^{+}(n,\gamma)Z_{\Delta,r}(n,\gamma)
\]
with the twisted Heegner divisors $Z_{\Delta,r}(n,\gamma)$ as defined in \cite{bruinierono}, Section~5. For $\Delta \neq 1$ the divisor $Z_{\Delta,r}(n,\gamma)$ has degree $0$, therefore we set $y_{\Delta,r}(f) = Z_{\Delta,r}(f)$ in this case.

The space $S_{3/2,\overline{\tilde{\rho}}_{N}}^{\new}$ is isomorphic to the space of (holomorphic if $\Delta > 0$, skew-holomorphic if $\Delta < 0$) cuspidal Jacobi newforms of weight $2$ and index $N$ , which is in turn isomorphic as a module over the Hecke algebra to the space $S_{2}^{\new,\mp}(N)$ ($\mp = -$ if $\Delta > 0$, $\mp = +$ if $\Delta < 0$) of newforms of weight $2$ for $\Gamma_{0}(N)$ on which the Fricke involution acts with eigenvalue $\pm 1$. Here we understand that for $\Delta < 0$ the space $S_{3/2,\rho_{N}}^{\new}$ consists of those newforms which are orthogonal with respect to the Petersson inner product to the space of unary theta series of weight $3/2$ (as defined in \cite{bruinierschwagenscheidt}), %\[
%\sum_{\substack{d,\ell > 0 \\ d^{2}\ell \mid N}}\C\theta_{3/2,N/d^{2}\ell}(\tau)|U_{d}V_{\ell}
%\] 
%of unary theta series of weight $3/2$, where
%\[
%\theta_{3/2,N}(\tau) = \sum_{\gamma \in D_{N}}\sum_{\substack{n \in \Z \\ n \equiv \gamma (2N)}}ne\left(\frac{n^{2}\tau}{4N} \right)\e_{\gamma} \in S_{3/2,\rho_{N}},
%\]
since the latter functions correspond to Eisenstein series under the Shimura correspondence. We have the following twisted version of Theorem~\ref{weak converse theorem} and Theorem~\ref{finiteness theorem}.
 
\begin{Theorem}
	Let $\Delta \neq 1$ and let $r \in \Z/2N\Z$ with $r^{2}\equiv \Delta (4N)$. Let $F$ be a meromorphic modular form for $\Gamma_{0}(N)$ with a unitary multiplier system (possibly of infinite order). Suppose that the divisor of $F$ on $X_0(N)$ is a linear combination of twisted Heegner divisors $Z_{\Delta,r}(n,\gamma)$. There exists a harmonic Maass form $f \in H_{1/2,\tilde{\rho}_{N}}^{+}$ whose $(\Delta,r)$-twisted generalized Borcherds product $\Psi_{\Delta,r}(f,z)$ is a non-zero constant multiple of $F$. Further, the following statements are equivalent.
	\begin{enumerate}
		\item The multiplier system of $F$ has finite order.
		\item The divisor $y_{\Delta,r}(f)$ is torsion in the Jacobian of $X_{0}(N)$.
		\item The coefficients $a_{f}^{+}(|\Delta|n^{2},rn)$ are rational for all $n \in \Z$.
		\item The coefficients $a_{f}^{+}(|\Delta|n^{2},rn)$ are algebraic for all $n \in \Z$.
		\item We have
		\[
		\left(\xi_{1/2}f,g\right)L'(G,\chi_{\Delta},1)= 0
		\]
		for every newform $G \in S_{2}^{\new,\pm}(N/m)$ for $m\mid N$ and every cusp form $g$ in the $G$-isotypical component of $S_{3/2,\overline{\tilde{\rho}}_{N}}$. Here $\chi_{\Delta} = \left( \frac{\Delta}{\cdot}\right)$ is the Kronecker symbol.
		\end{enumerate}
\end{Theorem}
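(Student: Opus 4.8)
The plan is to run the proofs of Theorems~\ref{weak converse theorem} and~\ref{finiteness theorem} in the $(\Delta,r)$-twisted setting, substituting throughout the twisted analogues of the ingredients used there. The existence statement is in fact easier than in the untwisted case: since $\Delta\neq 1$, the divisors $Z_{\Delta,r}(n,\gamma)$ are supported on $Y_0(N)$ and have degree $0$, so $\Psi_{\Delta,r}(f,z)$ has weight $0$ and no cuspidal divisor, and the delicate matching of the Weyl vectors at the several cusps -- which was the heart of Theorem~\ref{weak converse theorem} -- does not arise at all. By the valence formula the given form $F$ then necessarily has weight $0$ as well.

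For the existence of $f$, I would first recall, exactly as in the untwisted case (compare the proof of Theorem~\ref{weak converse theorem} and \cite{bruinierono}), that every integral linear combination $\sum_{\gamma}\sum_{n<0}c(n,\gamma)Z_{\Delta,r}(n,\gamma)$ of twisted Heegner divisors equals $Z_{\Delta,r}(f_{0})$ for some $f_{0}\in H_{1/2,\tilde{\rho}_{N}}^{+}$ with real holomorphic part: one prescribes the principal part coefficients $a_{f_{0}}^{+}(n,\gamma)=c(n,\gamma)$ for $n<0$ (the relation $Z_{\Delta,r}(n,-\gamma)=\operatorname{sgn}(\Delta)\,Z_{\Delta,r}(n,\gamma)$ being compatible with the symmetry of the coefficients $a_{f_{0}}^{+}$), uses the surjectivity of the principal part map onto the space of admissible principal parts, and averages $f_{0}$ with its complex conjugate to make the holomorphic part real, so that the twisted Borcherds product $\Psi_{\Delta,r}(f_{0},z)$ is defined. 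Choosing the $c(n,\gamma)$ to be the multiplicities of the divisor of $F$, the quotient $F/\Psi_{\Delta,r}(f_{0},z)$ is a weight $0$ meromorphic modular form with unitary multiplier system and trivial divisor on the compact curve $X_0(N)$; hence $\log|F/\Psi_{\Delta,r}(f_{0},z)|$ is a bounded harmonic function on $Y_0(N)$ which extends across the cusps, hence is constant, so $F/\Psi_{\Delta,r}(f_{0},z)$ is a holomorphic function on $\H$ of constant modulus, hence a non-zero constant. In particular the two multiplier systems agree, and $F$ is a constant multiple of $\Psi_{\Delta,r}(f_{0},z)$.

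For the list of equivalences I may, after adjusting $f$ by a weakly holomorphic form and a unary theta series as in the remark following Theorem~\ref{finiteness theorem} -- which changes $\Psi_{\Delta,r}(f,z)$ only by a factor whose multiplier system has finite order, hence leaves each of (1)--(5) unaffected -- assume that $f$ satisfies the hypotheses of Theorem~\ref{finiteness theorem}. Then (1)$\Leftrightarrow$(2) is proved as there: a power $\Psi_{\Delta,r}(f,z)^{m}$ has trivial multiplier system if and only if it descends to a rational function on $X_0(N)$, that is, if and only if $m\cdot y_{\Delta,r}(f)=m\cdot\operatorname{div}\Psi_{\Delta,r}(f,z)$ is principal; and (2)$\Leftrightarrow$(3)$\Leftrightarrow$(4) is the $(\Delta,r)$-twisted analogue of Theorem~\ref{theorem bruinierono1}, with the same proof via the transcendence results of Waldschmidt, W\"ustholz and Scholl for periods of differentials of the third kind on $X_0(N)$ (see also \cite{bruinierono}).

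The main point is the equivalence of (1)/(2) with (5), which I would establish along the lines of Section~\ref{section finiteness theorem}. Using the twisted form of the decomposition~\eqref{eq direct sum decomposition} one splits $S_{3/2,\overline{\tilde{\rho}}_{N}}$ into $G$-isotypical components indexed by newforms $G\in S_2^{\new,\pm}(N/m)$ for $m\mid N$, and decomposes the Jacobian $J$ up to isogeny into the corresponding abelian subvarieties $A_{G}$; by the Hecke-equivariance of the twisted Heegner divisor generating series the class $y_{\Delta,r}(f)$ has trivial image in the remaining isotypical components, so (2) is equivalent to the image $y_{G}$ of $y_{\Delta,r}(f)$ in $A_{G}$ being torsion for every such $G$. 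Now by the $(\Delta,r)$-twisted Gross--Kohnen--Zagier theorem the generating series of the classes $[Z_{\Delta,r}(n,\gamma)]$ is a cusp form in $S_{3/2,\overline{\tilde{\rho}}_{N}}$ with values in $J(\Q)\otimes\C$; pairing it with $f$ and using that $f$ is orthogonal to cusp forms, one identifies $y_{G}$ with $\sum_{g}(\xi_{1/2}f,g)\,P_{g}$, where $g$ runs over the basis $\{g_{0}|U_{d}V_{\ell}:d^{2}\ell=m\}$ of the $G$-isotypical component and $P_{g}\in A_{G}(\Q)\otimes\C$ are the associated twisted Heegner points. By the twisted Gross--Zagier formula the N\'eron--Tate height pairing satisfies $\langle P_{g},P_{g'}\rangle=c_{G}\,L'(G,\chi_{\Delta},1)\,(g,g')$ with a non-zero constant $c_{G}$ independent of $g,g'$; since the Petersson pairing is positive definite, $\sum_{g}(\xi_{1/2}f,g)P_{g}$ is torsion in $A_{G}$ if and only if $(\xi_{1/2}f,g)\,L'(G,\chi_{\Delta},1)=0$ for every $g$ in the $G$-isotypical component, and letting $G$ vary gives (5). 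I expect this last step to be the main obstacle: it requires the $(\Delta,r)$-twisted Gross--Kohnen--Zagier modularity and Hecke-equivariance of the twisted Heegner divisors together with the twisted Gross--Zagier height formula, both in the vector-valued formulation and relative to the $g_{0}|U_{d}V_{\ell}$-basis with one uniform constant $c_{G}$, so that positivity of the Petersson pairing can convert the vanishing of a single divisor class into the system of identities in (5); the existence statement and the equivalences (1)--(4), by contrast, are routine adaptations of the corresponding untwisted arguments.
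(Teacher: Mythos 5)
Your handling of the existence statement and of the equivalences (1)--(4) matches the intended argument: because the $Z_{\Delta,r}(n,\gamma)$ have degree $0$ and are supported on $Y_0(N)$, the cuspidal matching that occupied most of the proof of Theorem~\ref{weak converse theorem} is vacuous (indeed $\Psi_{\Delta,r}(f,z)=1$ for $f\in M_{1/2,\tilde{\rho}_N}$), the quotient $F/\Psi_{\Delta,r}(f_0,z)$ has weight $0$ and empty divisor on $X_0(N)$, and the constancy argument goes through; the chain (2)$\Leftrightarrow$(3)$\Leftrightarrow$(4) is literally the twisted statement of Theorem~6.2 of \cite{bruinierono}, and (1)$\Leftrightarrow$(2) is the Manin--Drinfeld remark.

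The genuine gap is in your route to (5). You reduce everything to a uniform height identity $\langle P_g,P_{g'}\rangle=c_G\,L'(G,\chi_\Delta,1)\,(g,g')$ valid for \emph{all} $g,g'$ in the $G$-isotypical component of $S_{3/2,\overline{\tilde{\rho}}_N}$, that is, for the oldform translates $g_0|U_dV_\ell$ of a newform $g_0$ of level $N/m$. That identity is precisely the input that is not available: the twisted Gross--Zagier and Gross--Kohnen--Zagier statements, as packaged in Theorems~7.6 and 7.8 of \cite{bruinierono}, are proved only for newforms at exact level, and for the oldform part of the component the Gram matrix of the associated Heegner classes is a priori only $L'(G,\chi_\Delta,1)$ times some positive \emph{semi}-definite matrix of local terms; unless that matrix is known to be proportional to the Petersson Gram matrix (or at least definite), the vanishing of $\langle y_G,y_G\rangle$ does not force the full system of identities in (5). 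The intended proof avoids this entirely by running the argument of Section~\ref{section finiteness theorem}: after reducing to a single Hecke eigenspace, write $\xi_{1/2}f=\sum_{d^2\ell=m}a_{d,\ell}\,g_0|U_dV_\ell$ and correspondingly $f$ as an algebraic combination of $f_{g_0}|U_dV_\ell$ plus a weakly holomorphic form, extract the coefficient of index $(|\Delta|d_0^2,rd_0)$ for the minimal $d_0$, and conclude that $a_{f_{g_0}}^{+}(|\Delta|,r)$ is algebraic; this invokes the (twisted) newform-level Theorem~\ref{theorem bruinierono2} only at level $N/m$, where it is actually proved. A second, smaller omission: for $\Delta<0$ one has $\overline{\tilde{\rho}}_N=\rho_N$, and $S_{3/2,\rho_N}$ contains the unary theta series of weight $3/2$, which lie in no $G$-isotypical component; your decomposition of $y_{\Delta,r}(f)$ over newforms misses this part, and one must check separately -- via Theorem~4.7 of \cite{bruinierschwagenscheidt}, as the paper points out -- that a harmonic Maass form mapping to such a theta series under $\xi_{1/2}$ has rational holomorphic part, so that (3) holds automatically on that piece.
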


\begin{Remark}
	Note that $M_{1/2,\rho_{N}}$ has a basis consisting of theta series (see Lemma~\ref{lemma basis unary theta series} below) and $M_{1/2,\overline{\rho}_{N}} \cong J_{1,N} = \{0\}$ by a well-known result of Skoruppa (compare \cite{eichlerzagier}, Theorem~5.7). Using this it follows immediately from the definition of the twisted Borcherds product that $\Psi_{\Delta,r}(f,z) = 1$ for $\Delta \neq 1$ and $f \in M_{1/2,\tilde{\rho}_{N}}$. Hence, in contrast to the the untwisted case, for $\Delta \neq 1$ we do not need to assume that $f$ is orthogonal to cusp forms or that its coefficients $\sum_{\gamma \in D_{N}}a_{f}^{+}(0,\gamma)\e_{\gamma}$ are rational. Further, this shows that the harmonic Maass form $f$ in the theorem is only unique up to addition of a holomorphic modular form.
\end{Remark}

The proof of the theorem is analogous to the proofs of Theorem~\ref{weak converse theorem} and Theorem~\ref{finiteness theorem}, so we will leave the details to the reader. 
We only mention that if $\Delta < 0$ and $\xi_{1/2}f$ lies in the space of unary theta series of weight $3/2$, then the holomorphic part of $f$ has rational Fourier coefficients by \cite{bruinierschwagenscheidt}, Theorem~4.7. Hence the above converse theorem is trivially true for such $f$.

The work is organized as follows. In Section~\ref{section operators} we translate some classical operators on Jacobi forms into the vector valued setting. In Section~\ref{section results bruinier ono} we recall some results of Ono and the first named author \cite{bruinierono} on the algebraicity of Fourier coefficients of harmonic Maass forms, which are crucial for the proofs of our main theorems. In Section~\ref{section weak converse theorem} we prove the weak converse theorem, Theorem~\ref{weak converse theorem}, and in Section~\ref{section finiteness theorem} we prove the finiteness criterion for generalized Borcherds products, Theorem~\ref{finiteness theorem}. Along the way we obtain some general results about newforms and harmonic Maass forms of half-integral weight which might be of independent interest. 

\section{Operators on vector valued modular forms}\label{section operators}

We recall some classical operators on Jacobi forms from \cite{eichlerzagier}, viewed here as operators on vector valued modular forms. Let $\rho$ be one of the representations $\rho_{N}$ or $\overline{\rho}_{N}$, and let $k \in \frac{1}{2}+\Z$.
	
	 The automorphism group $\Aut(D_{N})$ acts on vector valued modular forms $f = \sum_{\gamma}f_{\gamma}\e_{\gamma}$ for $\rho$ by $f^{\sigma} = \sum_{\gamma}f_{\gamma}\e_{\sigma(\gamma)}$.	 The elements of $\Aut(D_{N})$ are all involutions, also called Atkin-Lehner involutions, and correspond to the exact divisors $c\mid \mid N$ (i.e., $c\mid N$ and $(c,N/c) =1$). The automorphism $\sigma_{c}$ corresponding to $c$ is defined by the equations
	\begin{align*}
	\sigma_{c}(\gamma) \equiv -\gamma \ (2c) \quad \text{and} \quad \sigma_{c}(\gamma) \equiv \gamma \ (2N/c)
	\end{align*}
	for $\gamma \in D_{N}$, compare \cite{eichlerzagier}, Theorem~5.2.
	
	For each positive integer $n$ there is a Hecke operator $T_{n}$ acting on the space $M_{k,\rho}^{!}$. For example, if $n = p$ is a prime with $(p,N) = 1$, then the action of $T_{p}$ on the Fourier expansion of a weakly holomorphic vector valued modular form 
\[
f = \sum_{\gamma \in D_{N}}\sum_{n \gg -\infty}a_{f}(n,\gamma)e\left(\frac{n\tau}{4N} \right)\e_{\gamma} \in M_{1/2,\rho}^{!}
\]
of weight $k$ for $\rho$ is given by
\[
f|T_{p} = \sum_{\gamma \in L'/L}\sum_{n \gg -\infty}a_{f|T_{p}}(n,\gamma)e\left(\frac{n\tau}{4N} \right)\e_{\gamma} \in M_{1/2,\rho}^{!}
\]
with
\[
a_{f|T_{p}}(n,\gamma) = a_{f}(p^{2}n,p\gamma) + p^{k-3/2}\left(\frac{\sigma n}{p}\right)a_{f}(n,\gamma) + p^{2k-2}a_{f}(n/p^{2},\gamma/p),
\]
where $\sigma = 1$ if $\rho = \rho_{N}$ and $\sigma = -1$ if $\rho = \overline{\rho}_{N}$. 

There are level raising operators $U_{d}$ and $V_{\ell}$ which act on the Fourier expansion of a weakly holomorphic modular form of weight $k$ for $\overline{\rho}_{N}$ by
\begin{align*}
f|U_{d} &= \sum_{\gamma \in D_{Nd^{2}}}\sum_{n \gg -\infty}a_{f}(n/d^{2},\gamma/d)e\left(\frac{n\tau}{4Nd^{2}}\right)\e_{\gamma} \in M_{k,\overline{\rho}_{Nd^{2}}}^{!}, \\
f|V_{\ell} &= \sum_{\gamma \in D_{N\ell}}\sum_{n \gg -\infty}\left(\sum_{a \mid (\frac{n+\gamma^{2}}{4N},\gamma,\ell)}a^{k-1/2}a_{f}(n/a^{2},\gamma/a) \right)e\left(\frac{n\tau}{4N\ell}\right)\e_{\gamma} \in M_{k,\overline{\rho}_{N\ell}}^{!}
\end{align*}
Their actions on $M_{k,\rho_{N}}^{!}$ almost look the same, but in the action of $V_{\ell}$ we have to replace $n$ with $-n$ in the greatest common divisor in the index of the innermost sum. These two operators commute with each other for all $d$ and $\ell$, and they commute with the Hecke operators $T_{n}$ for $(n,N) = 1$.

All the above operators act on harmonic Maass forms as well, their actions on the Fourier expansion being the same. For $f \in H_{k,\rho}^{+}$ we have the following commutation relations with the $\xi$-operator,
\begin{align*}
	\xi_{k}(f^{\sigma_{c}}) & = (\xi_{k}f)^{\sigma_{c}}, \\
	\xi_{k}(f|T_{n}) &= n^{2k-2}(\xi_{k}f)|T_{n}, \\
	\xi_{k}(f|U_{d}) &= (\xi_{k}f)|U_{d}, \\
	\xi_{k}(f|V_{\ell}) &= \ell^{k-1}(\xi_{k}f)|V_{\ell},
\end{align*}
which can be verified using the explicit actions of the operators on the Fourier expansions.

%\begin{Remark}\label{remark operators}
%	Let $f \in H_{k,\rho}^{+}$. The following simple observations will be useful in the proof of the strong converse theorem.
%	\begin{enumerate}
%		\item If the coefficients of index $(n^{2},n)$ of the holomorphic part of $f$ are algebraic for all $n \in \Z$, then the same is true for the corresponding coefficients of $f|T_{p}$, where $p$ is a prime with $(p,N) = 1$.
%		\item The Fourier expansion of $f|U_{d}$ is supported on indices $(n,\gamma)$ with $d^{2} \mid n$ and $d \mid \gamma$, and the coefficient of index $(d^{2},d)$ of $f|U_{d}$ is the coefficient of index $(1,1)$ of $f$.
%		\item The coefficient of index $(1,1)$ of $f|V_{\ell}$ is the coefficient of index $(1,1)$ of $f$.
%	\end{enumerate}
%\end{Remark} 

\section{Algebraicity results for the coefficients of harmonic Maass forms}\label{section results bruinier ono}

We recall a few results from \cite{bruinierono} which relate the finiteness of the multiplier system of the generalized Borcherds product of $f \in H_{1/2,\rho_{N}}^{+}$ with the algebraicity of some of the Fourier coefficients of $f$ and the vanishing of certain central $L$-derivatives of weight $2$ newforms.

\begin{Theorem}[\cite{bruinierono}, Theorem 6.2]\label{theorem bruinierono1}
	Let $f \in H_{1/2,\rho_{N}}^{+}$ be a harmonic Maass form with real coefficients $a_{f}^{+}(n,\gamma)$ for all $n \in \Z,\gamma \in D_{N}$, and integral coefficients $a_{f}^{+}(n,\gamma) \in \Z$ for all $n < 0, \gamma \in D_{N}$. Further assume that $a_{f}^{+}(0,0) = 0$, $f$ is orthogonal to cusp forms, and the principal part of $f$ is defined over $\Q$. Then the following statements are equivalent.
	\begin{enumerate}
		\item The multiplier system of the Borcherds product $\Psi(f,z)$ has finite order.
		\item The coefficients $a_{f}^{+}(n^{2},n)$ are rational for all $n \in \Z$.
		\item The coefficients $a_{f}^{+}(n^{2},n)$ are algebraic for all $n \in \Z$.
	\end{enumerate}
\end{Theorem}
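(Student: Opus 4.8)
The plan is to route the whole argument through the logarithmic derivative of the Borcherds product. Since $\Psi(f,z)$ is a meromorphic modular form of weight $a_f^+(0,0)=0$ for $\Gamma_0(N)$ with a unitary multiplier $\sigma$, the function $F_f(z):=\frac{1}{2\pi i}\frac{d}{dz}\log\Psi(f,z)$ is a meromorphic modular form of weight $2$ with trivial multiplier, so $\omega_f:=F_f(z)\,dz$ descends to a meromorphic differential of the third kind on $X_0(N)$ whose residue at a point $P$ is $\operatorname{ord}_P\Psi(f,z)$. Thus its residue divisor is $\operatorname{div}(\Psi(f,z))$: integral on $Y_0(N)$ (it equals $Z(f)$, whose coefficients $a_f^+(n,\gamma)$, $n<0$, are integers) and supported on the cusps with coefficients the Weyl vectors of $f$. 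I would first check that, under the present hypotheses, these Weyl vectors are rational — each is a regularized Petersson product of $f$ against a unary theta series of weight $1/2$, which the Bruinier--Funke pairing evaluates as a $\Q$-linear combination of the (rational) principal part coefficients of $f$; hence $\operatorname{div}(\Psi(f,z))$ is a genuine $\Q$-rational divisor on $X_0(N)$. From the product expansion one gets
\[
F_f(z)=\rho_f-\sum_{k\geq 1}\Bigl(\sum_{d\mid k}d\,a_f^+(d^2,d)\Bigr)q^k,\qquad q=e(z),
\]
with $\rho_f\in\Q$ the Weyl vector at $\infty$, so by M\"obius inversion the numbers $a_f^+(n^2,n)$ are all rational (resp.\ all algebraic) precisely when $\omega_f$ is defined over $\Q$ (resp.\ over $\overline\Q$). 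The skeleton is then: (2)$\Rightarrow$(3) is trivial, so it suffices to prove (1)$\Rightarrow$(2) and (3)$\Rightarrow$(1).

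For (1)$\Rightarrow$(2): if $\sigma$ has order $m$, then $\Psi(f,z)^m$ is a meromorphic modular function for $\Gamma_0(N)$ with divisor $m\operatorname{div}(\Psi(f,z))$ defined over $\Q$. Using that $X_0(N)$ has a model over $\Q$ with a rational cusp at $\infty$ (so a $\Q$-rational principal divisor is the divisor of a $\Q$-rational function), and comparing leading $q$-coefficients, one finds that $\Psi(f,z)^m$ is a rational constant times a $\Q$-rational function, hence has a rational $q$-expansion; then $\omega_f=\frac{1}{2\pi i m}d\log\Psi(f,z)^m$ has rational $q$-coefficients, and the first paragraph gives $a_f^+(n^2,n)\in\Q$ for all $n$.

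The hard direction is (3)$\Rightarrow$(1). Fix $z_0\in\H$ with $\Psi(f,z_0)\neq0,\infty$; since $\Psi(f,z)$ has weight $0$ and unitary multiplier, $\sigma(M)=\exp\bigl(2\pi i\int_{z_0}^{Mz_0}\omega_f\bigr)$ with $\int_{z_0}^{Mz_0}\omega_f\in\R$, so $\sigma$ has finite order iff all these periods are rational. First, if $\operatorname{div}(\Psi(f,z))$ is torsion in the Jacobian then $\sigma$ is automatically of finite order: writing $m\operatorname{div}(\Psi(f,z))=\operatorname{div}(h)$ with $h$ a modular function, the holomorphic differential $\omega_f-\frac{1}{2\pi i m}d\log h$ has all periods real (a difference of reals and elements of $\frac1m\Z$) and hence vanishes, because its class would lie in $H^0(X_0(N),\Omega^1)\cap H^1(X_0(N),\R)=0$; so $\omega_f=\frac{1}{2\pi i m}d\log h$ has periods in $\frac1m\Z$. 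Second — this is where (3) and the transcendence theory enter — I would show that $\operatorname{div}(\Psi(f,z))$ \emph{is} torsion: by (3), it is the residue divisor of a differential of the third kind on $X_0(N)$ defined over $\overline\Q$ (namely $\omega_f$), with integral residues off the cusps, all of whose periods are real, and by the transcendence theorems of Waldschmidt, W\"ustholz and Scholl \cite{waldschmidt,wuestholz,scholl} on periods of differentials of the third kind (ultimately W\"ustholz's analytic subgroup theorem), a non-torsion residue divisor would contribute to these periods a transcendental abelian logarithm $\overline\Q$-linearly independent of $2\pi i$ and of the periods of holomorphic differentials — which the reality of all periods of $\omega_f$ excludes. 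Hence $\operatorname{div}(\Psi(f,z))$ is torsion, $\sigma$ has finite order, and (1) holds, closing the cycle (2)$\Rightarrow$(3)$\Rightarrow$(1)$\Rightarrow$(2).

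I expect the last step to be the main obstacle. Algebraicity of the $q$-coefficients pins down $\omega_f$ only up to the torsion ambiguity in the Jacobian — for every $\Q$-rational degree-$0$ divisor there is a differential of the third kind over $\overline\Q$ with that residue divisor, torsion or not — so the argument genuinely needs the extra input, forced by the unitarity of $\sigma$, that all periods of $\omega_f$ are real; the work is to extract from the Waldschmidt--W\"ustholz--Scholl results the precise statement that an algebraic differential of the third kind with real periods has torsion residue divisor, and to verify $\omega_f$ meets its hypotheses. The rationality of the Weyl vectors used in the first paragraph is, by comparison, a routine computation with regularized Petersson products.
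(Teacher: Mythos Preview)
This theorem is not proved in the present paper: it is quoted verbatim from \cite{bruinierono}, Theorem~6.2, as an input for the later arguments. So there is no proof here to compare your proposal against.

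That said, your sketch is essentially the argument of the original reference. The reformulation via the logarithmic differential $\omega_f = \frac{1}{2\pi i}\,d\log\Psi(f,z)$, the $q$-expansion identity $F_f(z)=\rho_f-\sum_{k\ge 1}\bigl(\sum_{d\mid k}d\,a_f^+(d^2,d)\bigr)q^k$, and the M\"obius-inversion link between algebraicity of the $a_f^+(n^2,n)$ and $\overline\Q$-rationality of $\omega_f$ are all correct. Your treatment of the implication ``torsion $\Rightarrow$ finite order'' via the Hodge-theoretic fact $H^0(X_0(N),\Omega^1)\cap H^1(X_0(N),\R)=0$ is clean and correct, and your (1)$\Rightarrow$(2) via the $\Q$-structure on $X_0(N)$ is the right route. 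You have also correctly located the genuine content in (3)$\Rightarrow$(1): that an algebraic differential of the third kind whose $H_1$-periods are all real must have torsion residue divisor, which is precisely what one extracts from the Waldschmidt--W\"ustholz--Scholl circle of results (ultimately W\"ustholz's analytic subgroup theorem).

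One point deserves more care than you give it. The rationality of the Weyl vectors is not quite the ``routine'' Bruinier--Funke computation you describe: the regularized pairing $\langle f,\theta\rangle^{\reg}$ does \emph{not} reduce directly to a pairing of the principal part of $f$ against coefficients of $\theta$; one needs to pass through a $\xi$-preimage on the other side, and it is exactly here that the hypotheses ``$f$ orthogonal to cusp forms'' and ``principal part of $f$ over $\Q$'' are genuinely used (together with the Serre--Stark-type description of $M_{1/2,\rho_N}$, Lemma~\ref{lemma basis unary theta series} in this paper). In \cite{bruinierono} this is handled via an explicit formula for the Weyl vectors; you should either invoke that formula or spell out the pairing argument carefully.
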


We further require Theorem~7.6 and Theorem~7.8 from \cite{bruinierono}. The theorems are only stated in twisted versions for $\Delta \neq 1$ in the reference. The following proposition is needed to obtain an untwisted version.

\begin{Proposition}\label{proposition rational basis}
	The orthogonal complement of $S_{1/2,\rho_{N}}$ in $M_{1/2,\rho_{N}}^{!}$ has a basis consisting of weakly holomorphic modular forms with rational Fourier coefficients. In particular, if $f \in M_{1/2,\rho_{N}}^{!}$ is a weakly holomorphic modular form whose principal part is defined over some number field $K$ and which is orthogonal to cusp forms, then all coefficients of $f$ lie in $K$.
\end{Proposition}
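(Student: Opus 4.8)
The plan is to reduce the proposition to a single rationality input for the ambient space and then to conclude by elementary linear algebra. First I would note that $S_{1/2,\rho_{N}}=\{0\}$: by Lemma~\ref{lemma basis unary theta series} the space $M_{1/2,\rho_{N}}$ is spanned by unary theta series, and there are no nonzero cusp forms of weight $1/2$ (Serre--Stark). Consequently the orthogonal complement of $S_{1/2,\rho_{N}}$ in $M_{1/2,\rho_{N}}^{!}$ is all of $M_{1/2,\rho_{N}}^{!}$, and the hypothesis in the second assertion that $f$ be orthogonal to cusp forms is automatically satisfied --- it is included only for parallelism with the harmonic Maass form statements. Moreover, a weakly holomorphic form with vanishing principal part is holomorphic and vanishes at the cusp, hence lies in $S_{1/2,\rho_{N}}=\{0\}$; so the map $f\mapsto P_{f}$ sending a form to its principal part is injective on $M_{1/2,\rho_{N}}^{!}$.

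Granting the key input that $M_{1/2,\rho_{N}}^{!}$ has a basis $f_{1},\dots,f_{r}$ of forms with rational Fourier coefficients, both assertions follow. By injectivity of $f\mapsto P_{f}$ the principal parts $P_{f_{1}},\dots,P_{f_{r}}$ are $\C$-linearly independent Laurent polynomials with rational coefficients. Thus, for any $f=\sum_{i}c_{i}f_{i}\in M_{1/2,\rho_{N}}^{!}$ whose principal part $P_{f}$ is defined over a number field $K$, the coefficients $c_{i}$ are the unique solution of the linear system $\sum_{i}c_{i}P_{f_{i}}=P_{f}$, whose coefficient matrix is $\Q$-rational of full rank and whose right-hand side has entries in $K$; solving it over $K$ gives $c_{i}\in K$, and hence all Fourier coefficients of $f$ lie in $K$. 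The special case $K=\Q$ recovers the asserted rational basis of the orthogonal complement.

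It thus remains to produce a basis of $M_{1/2,\rho_{N}}^{!}$ consisting of forms with rational Fourier coefficients. This is a known rationality statement for vector-valued modular forms attached to the Weil representation --- the untwisted ($\Delta=1$) counterpart of the rationality used in Theorems~7.6 and~7.8 of \cite{bruinierono} --- and I would simply cite it. The one genuinely delicate point, which is the main obstacle of the argument, is that $\rho_{N}$ is not defined over $\Q$: a Galois automorphism conjugates $\rho_{N}$, via an automorphism of the discriminant group $D_{N}$, to $\rho_{N}$ or to $\overline{\rho}_{N}$, according to whether it acts as a square modulo the conductor. Hence a naive approach --- passing to the scalar-valued components $f_{\gamma}$ of $f=\sum_{\gamma}f_{\gamma}\e_{\gamma}$, dividing by the nowhere-vanishing Jacobi theta function to obtain weakly holomorphic modular functions on a modular curve, and invoking the $q$-expansion principle --- only produces a rational structure over a small cyclotomic field, and descending this to $\Q$ is precisely what requires the cited rationality results. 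Once that input is in hand, the remaining steps (the vanishing $S_{1/2,\rho_{N}}=\{0\}$, the injectivity of $f\mapsto P_{f}$, and the linear-algebra deduction above) are entirely routine, so in practice one quotes the rationality statement and runs the short argument of the second paragraph.
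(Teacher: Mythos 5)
Your argument collapses at its first step: the claim that $S_{1/2,\rho_{N}}=\{0\}$ is false in general. Serre--Stark (and its vector valued analogue, Lemma~\ref{lemma basis unary theta series}) says that $M_{1/2,\rho_{N}}$ is spanned by unary theta series, not that there are no cusp forms; some linear combinations of these theta series are cuspidal. Concretely, for every exact divisor $c\mid\mid N$ the form $\theta_{1/2,N}^{\sigma_{c}}$ has constant term $\e_{0}$ (since $\sigma_{c}(0)=0$), so for $N$ with at least two distinct prime factors --- say $N=6$, $c=2$ --- the difference $\theta_{1/2,N}-\theta_{1/2,N}^{\sigma_{c}}$ is a nonzero element of $S_{1/2,\rho_{N}}$ (its $e(\tau/24)$-coefficient is $\e_{1}+\e_{11}-\e_{5}-\e_{7}\neq 0$). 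The vanishing you have in mind, $J_{1,N}=\{0\}$, concerns the \emph{dual} Weil representation $\overline{\rho}_{N}$; this is exactly the point of the Remark following the proposition, which explains why the dual case is easy while the case of $\rho_{N}$ is not. Since $S_{1/2,\rho_{N}}\neq\{0\}$, the principal part map is not injective on $M^{!}_{1/2,\rho_{N}}$ (cusp forms have vanishing principal part), the orthogonality hypothesis is not vacuous (adding $\pi\cdot(\theta_{1/2,6}-\theta_{1/2,6}^{\sigma_{2}})$ to $f$ preserves the principal part but destroys rationality of the coefficients), and your linear system $\sum_{i}c_{i}P_{f_{i}}=P_{f}$ determines $f$ only modulo $S_{1/2,\rho_{N}}$.

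What is missing is the mechanism that selects, over the field $K$, the correct representative in the coset $f+S_{1/2,\rho_{N}}$. The paper does this by producing, for each $m>0$ and $\beta\in D_{N}$ with $m\equiv\beta^{2}\ (4N)$, a harmonic Maass form $\mathcal{P}_{m,\beta}\in H^{+}_{3/2,\overline{\rho}_{N}}$ with principal part $e(-m\tau/4N)(\e_{\beta}+\e_{-\beta})+\mathfrak{c}$ whose \emph{entire holomorphic part} can be arranged to be rational; the genuinely nontrivial input here is Theorem~4.7 of \cite{bruinierschwagenscheidt}, which gives rationality of the holomorphic parts of $\xi$-preimages of the unary theta series spanning $S_{1/2,\rho_{N}}$. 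Stokes' theorem then converts the hypothesis $(f,\xi_{3/2}\mathcal{P}_{m,\beta})=0$ into
\[
2a_{f}(m,\beta)=-\sum_{\gamma\in D_{N}}\sum_{n\geq 0}a_{f}(-n,\gamma)\,a^{+}_{\mathcal{P}_{m,\beta}}(n,\gamma),
\]
which expresses every coefficient of $f$ as a rational linear combination of its principal part coefficients. Your reduction of the first assertion to the second via a rational basis of $M^{!}_{1/2,\rho_{N}}$ is essentially what the paper does (one additionally subtracts cusp forms from the basis elements to land in the orthogonal complement), but the second assertion itself requires this weight-$3/2$ pairing argument; it does not follow from linear algebra alone.
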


\begin{proof}
	We first show the second claim. For every $m >0, \beta\in D_{N},$ with $m \equiv \beta^{2}(4N)$ there exists a harmonic Maass form $\mathcal{P}_{m,\beta}\in H_{3/2,\overline{\rho}_{N}}^{+}$  with principal part $e(-m\tau/4N)(\e_{\beta}+\e_{-\beta}) + \mathfrak{c}$ for some constant $ \mathfrak{c} \in \C[D_{N}]$, compare \cite{bruinierfunke04}, Proposition 3.11. By adding a suitable linear combination of Eisenstein series of weight $3/2$ we can assume that $\mathfrak{c} \in \Q[D_{N}]$, so the principal part of $\mathcal{P}_{m,\beta}$ is defined over $\Q$. Since $\xi_{3/2}\mathcal{P}_{m,\beta} \in S_{1/2,\rho_{N}}$ and the latter space has a basis consisting of unary theta series (see Lemma \ref{lemma basis unary theta series} below), Theorem~4.7 in \cite{bruinierschwagenscheidt} shows that we can assume that all coefficients $a_{\mathcal{P}_{m,\beta}}^{+}(n,\gamma), n \geq 0, \gamma \in D_{N},$ are rational by adding a suitable cusp form. Using Stokes' theorem (compare \cite{bruinierfunke04}, Proposition~3.5), we obtain
	\[
	0 = (f,\xi_{3/2}\mathcal{P}_{m,\beta}) = 2a_{f}(m,\beta) + \sum_{\gamma \in D_{N}}\sum_{n \geq 0}a_{f}(-n,\gamma)a_{\mathcal{P}_{m,\beta}}^{+}(n,\gamma).
	\]
	Since $a_{f}(-n,\gamma) \in K$ and $a_{\mathcal{P}_{m,\beta}}^{+}(n,\gamma) \in \Q$ for $n \geq 0, \gamma \in D_{N}$, this shows that all coefficients of $f$ lie in $K$.
	
	Now let $\{f_{j}\}$ be a basis of $M_{1/2,\rho_{N}}^{!}$ with rational coefficients. Consider those $f_{j}$ which are not cusp forms. By subtracting from every $f_{j}$ a suitable cusp form we obtain a weakly holomorphic modular form $\tilde{f}_{j}$ which is orthogonal to cusp forms but has the same principal part as $f_{j}$. It is clear that these $\tilde{f}_{j}$ form a basis of the orthogonal complement of $S_{1/2,\rho_{N}}$ in $M_{1/2,\rho_{N}}^{!}$. Since the $\tilde{f}_{j}$ are orthogonal to cusp forms and have rational principal part, they have rational coefficients.
\end{proof}

\begin{Remark}
	The analogous result for the space $M_{1/2,\overline{\rho}_{N}}^{!}$ follows from the fact that $S_{1/2,\overline{\rho}_{N}} \cong J_{1,N}^{\cusp} = \{0\}$ by a result Skoruppa, compare \cite{eichlerzagier}, Theorem~5.7, and that $M_{1/2,\overline{\rho}_{N}}^{!}$ has a basis consisting of forms with integral coefficients by results of McGraw \cite{mcgraw}.
	\end{Remark}
	
	Combined with the above proposition, the remarks after Theorem~7.6 and Theorem~7.8 in \cite{bruinierono} yield the following result.

\begin{Theorem}[\cite{bruinierono}, Theorems 7.6 and 7.8]\label{theorem bruinierono2}
	Let $G \in S_{2}^{\new,-}(N)$ be a newform and let $K_{G}$ be the number field generated by its coefficient. Let $g \in S_{3/2,\overline{\rho}_{N}}^{\new}$ be the newform corresponding to $G$ under the Shimura correspondence, normalized to have coefficients in $K_{G}$. There exists a harmonic Maass form $f \in H_{1/2,\rho_{N}}^{+}$ with $a_{f}^{+}(0,0) = 0$ which is orthogonal to cusp forms and has principal part defined over $K_{G}$ such that $\xi_{1/2}(f) = \|g\|^{-2}g$. Further, the following statements are equivalent.
	\begin{enumerate}
		\item $a_{f}^{+}(n^{2},n)$ is algebraic for all $n \in \Z$.
		\item $a_{f}^{+}(1,1)$ is algebraic.
		\item $L'(G,1) = 0$.
	\end{enumerate}
\end{Theorem}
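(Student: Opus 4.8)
The plan is to derive this theorem from Theorems~7.6 and~7.8 of \cite{bruinierono}, which prove the corresponding statements in the twisted setting ($\Delta\neq1$), by removing the twist; Proposition~\ref{proposition rational basis} is precisely the input that enables this passage. In the twisted case a harmonic Maass form realizing a prescribed $\xi$-image is unique up to addition of a \emph{holomorphic} form in $M_{1/2,\tilde\rho_N}$, and such forms have trivial twisted Borcherds product, so they affect none of the Fourier coefficients of interest. In the untwisted case the ambiguity is by the much larger space $M_{1/2,\rho_N}^{!}$ of weakly holomorphic forms, and one must know that, once a principal part over $K_G$ has been pinned down, this ambiguity is itself defined over $K_G$; this is exactly what Proposition~\ref{proposition rational basis} and its consequence --- that the orthogonal complement of $S_{1/2,\rho_N}$ in $M_{1/2,\rho_N}^{!}$ has a $\Q$-rational basis --- supply.

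First I would establish the existence of $f$. Since $\xi_{1/2}\colon H_{1/2,\rho_N}^{+}\to S_{3/2,\overline\rho_N}$ is surjective, choose any $f_1$ with $\xi_{1/2}(f_1)=\|g\|^{-2}g$. A Galois-descent argument --- the passage to principal parts and the map $\xi_{1/2}$ being equivariant for the natural action of $\Aut(\C/\Q)$ on Fourier coefficients, and $g$ having coefficients in $K_G$ --- lets one take the principal part of $f_1$ over $K_G$. Subtracting a suitable multiple of a unary theta series from $M_{1/2,\rho_N}$ (Lemma~\ref{lemma basis unary theta series}) then arranges $a_f^{+}(0,0)=0$, and subtracting a suitable element of $M_{1/2,\rho_N}^{!}$ makes $f$ orthogonal to cusp forms without changing $\xi_{1/2}(f)$; by Proposition~\ref{proposition rational basis} these corrections can be chosen over $K_G$, so the principal part stays in $K_G$.

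For the equivalences, $(1)\Rightarrow(2)$ is immediate. For $(2)\Rightarrow(1)$ I would use Hecke theory: for $(n,N)=1$ the relation $\xi_{1/2}(f|T_n)=n^{-1}(\xi_{1/2}f)|T_n=n^{-1}a_G(n)\,\xi_{1/2}(f)$ (using the commutation relations of Section~\ref{section operators} with $k=1/2$, and $g|T_n=a_G(n)g$) shows that $f|T_n-n^{-1}a_G(n)\,f$ lies in $M_{1/2,\rho_N}^{!}$ and, after removing its cusp form component, is orthogonal to cusp forms with $K_G$-rational principal part; Proposition~\ref{proposition rational basis} then yields $K_G$-rational Fourier coefficients for it, and comparing the $(n^2,n)$-coefficients expresses $a_f^{+}(n^2,n)$ through $a_G(n)\in K_G$, $a_f^{+}(1,1)$, and $K_G$-rational numbers (prime powers dividing $N$ being handled by the operators $U_d,V_\ell$). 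Thus algebraicity of the single coefficient $a_f^{+}(1,1)$ propagates to all $a_f^{+}(n^2,n)$. The equivalence of $(1)$ and $(2)$ with $(3)$ is the substance of \cite{bruinierono}: by the Gross--Kohnen--Zagier formula the divisor $y(f)$ is torsion in the Jacobian exactly when $L'(G,1)=0$, while the transcendence results of Waldschmidt, W\"ustholz and Scholl on periods of differentials of the third kind (see \cite{waldschmidt,wuestholz,scholl}) identify the finiteness of the multiplier system of $\Psi(f,z)$ --- equivalently the torsionness of $y(f)$ --- with the algebraicity of the exponents $a_f^{+}(n^2,n)$.

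The main obstacle I anticipate is the bookkeeping in this twisted-to-untwisted transfer: one has to verify that every weakly holomorphic correction invoked above --- to arrange $a_f^{+}(0,0)=0$, to enforce orthogonality to cusp forms, and to realize the principal part over $K_G$ --- alters the coefficients $a_f^{+}(n^2,n)$ only by elements of $K_G$. This is precisely the role of Proposition~\ref{proposition rational basis}. Once it is in place, the arguments surrounding Theorems~7.6 and~7.8 of \cite{bruinierono} go through with only notational changes, and in particular the equivalences $(1)\Leftrightarrow(2)\Leftrightarrow(3)$ follow.
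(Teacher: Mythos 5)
Your proposal takes essentially the same route as the paper, which derives this theorem in a single line by combining the twisted Theorems~7.6 and~7.8 of \cite{bruinierono} with Proposition~\ref{proposition rational basis} to control the weakly holomorphic ambiguity in the untwisted setting --- exactly the passage you describe and flesh out. One small slip in your preamble: the kernel of $\xi_{1/2}$ is the full space $M_{1/2,\tilde{\rho}_{N}}^{!}$ of weakly holomorphic forms in the twisted case as well (what is true is that holomorphic forms have trivial twisted Borcherds product), but this does not affect your argument.
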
 

\section{Proof of Theorem \ref{weak converse theorem}}\label{section weak converse theorem}

	We consider the unary theta series
	\[
	\theta_{1/2,N}(\tau) = \sum_{\gamma \in D_{N}}\sum_{\substack{n \in \Z \\ n \equiv \gamma(2N)}}e\left(\frac{n^{2}\tau}{4N} \right)\e_{\gamma} \in M_{1/2,\rho_{N}}.
	\]	
	We have the following analog of the Serre-Stark basis theorem for vector valued modular forms. 
	\begin{Lemma}[\cite{bruinierschwagenscheidt}, Lemma~2.1]\label{lemma basis unary theta series}
		Let $\mathcal{D}(N)$ be the set of all positive divisors of $N$ modulo the equivalence relation $d \sim N/d$. Then the theta series
		\[
		\theta_{1/2,N/(d,N/d)^{2}}^{\sigma_{d/(d,N/d)}}|U_{(d,N/d)}, \quad d \in \mathcal{D}(N),
		\]
		form a basis of $M_{1/2,\rho_{N}}$.
	\end{Lemma}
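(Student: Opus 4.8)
The statement is quoted from \cite{bruinierschwagenscheidt}, so the proof should really be a pointer plus a sketch of the mechanism. The plan is to establish three things: that the listed theta series all lie in $M_{1/2,\rho_{N}}$, that they are linearly independent, and that they span. Let me think about how I would carry this out.

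First I would recall why each $\theta_{1/2,N/(d,N/d)^{2}}^{\sigma_{d/(d,N/d)}}|U_{(d,N/d)}$ is a holomorphic modular form of weight $1/2$ for $\rho_{N}$. The building block $\theta_{1/2,M}$ is the standard unary theta series attached to the lattice $\Z(M)$; it is holomorphic of weight $1/2$ for $\rho_{M}$ by the usual theta transformation formula (Borcherds, \cite{borcherds}). The Atkin--Lehner automorphism $\sigma_{c}$ with $c\mid\mid M$ is an element of $\Aut(D_{M})$ and permuting the components of a modular form by an automorphism of the discriminant form again gives a modular form for the same representation, so $\theta_{1/2,M}^{\sigma_{c}}\in M_{1/2,\rho_{M}}$. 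Finally the level-raising operator $U_{d'}$ maps $M_{1/2,\rho_{M}}$ to $M_{1/2,\rho_{Md'^{2}}}$ by the explicit action on Fourier expansions recalled in Section~\ref{section operators}; with $M=N/(d,N/d)^{2}$ and $d'=(d,N/d)$ this lands in $M_{1/2,\rho_{N}}$. So the inclusion direction is routine bookkeeping with the operators from Section~\ref{section operators}.

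Next, linear independence. Here I would look at the leading Fourier exponents: the component $\e_{\gamma}$ of $\theta_{1/2,N/(d,N/d)^{2}}^{\sigma_{d/(d,N/d)}}|U_{(d,N/d)}$ is supported on a distinguished coset and its lowest-order term $e(n_{0}^{2}\tau/4N)$ has a minimal exponent governed by the divisor $d$. Different classes $d\in\mathcal{D}(N)$ produce different supports (or different minimal exponents), so a vanishing linear combination forces all coefficients to vanish; this is the same argument as in the classical Serre--Stark theorem, transported to the vector-valued setting. I expect this to be the technically fussiest part but not conceptually hard.

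The main obstacle is the spanning statement, i.e. that $M_{1/2,\rho_{N}}$ is \emph{exhausted} by unary theta series --- equivalently $\dim M_{1/2,\rho_{N}}=\#\mathcal{D}(N)$. The cleanest route is to invoke the isomorphism $M_{1/2,\rho_{N}}\cong J_{1,N}$ with Jacobi forms of weight $1$ and index $N$ (via \cite{eichlerzagier}, Theorem~5.1 and its weight-$1/2$ analogue) together with Skoruppa's classification of Jacobi forms of weight one, which says precisely that $J_{1,N}$ is spanned by theta series indexed by $\mathcal{D}(N)$; alternatively one argues directly on the vector-valued side by a dimension count (Serre--Stark for $\rho_{N}$), using that any $f\in M_{1/2,\rho_{N}}$ is, after restricting to $\imag(\tau)\to\infty$, a linear combination of the above theta series because its Fourier expansion is supported on squares $n\equiv\gamma^{2}\ (4N)$ with non-negative exponents, and the holomorphy at $\infty$ plus the weight-$1/2$ bound forces it to be a finite such combination. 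In any case, since the lemma is cited verbatim from \cite{bruinierschwagenscheidt}, I would state it, indicate that it is the Serre--Stark basis theorem in the guise of the Weil representation $\rho_{N}$, and refer to loc.\ cit.\ for the full argument rather than reproduce it.
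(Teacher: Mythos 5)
The paper itself gives no proof of this lemma: it is quoted verbatim from \cite{bruinierschwagenscheidt}, Lemma~2.1, with only the remark that the basis already appears in \cite{skoruppazagier}, p.~130, in the language of Jacobi forms. Your decision to state it, identify it as a Serre--Stark type theorem, and defer to loc.\ cit.\ is therefore consistent with what the paper does, and your decomposition into membership, independence, and spanning is the right skeleton.

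However, your sketch of the crucial spanning step contains a genuine error. You propose to invoke ``the isomorphism $M_{1/2,\rho_{N}}\cong J_{1,N}$'' together with Skoruppa's classification. But the Eichler--Zagier correspondence identifies $J_{k,N}$ with forms of weight $k-1/2$ for the \emph{dual} Weil representation, so it is $M_{1/2,\overline{\rho}_{N}}$ that is isomorphic to $J_{1,N}$, and Skoruppa's theorem says $J_{1,N}=\{0\}$ --- a fact the paper itself records in a Remark and uses elsewhere. Followed literally, your route would show $M_{1/2,\rho_{N}}=\{0\}$, contradicting the lemma. The space $M_{1/2,\rho_{N}}$ corresponds instead to skew-holomorphic Jacobi forms of weight~$1$ (equivalently, one applies the scalar-valued Serre--Stark theorem to the components $f_{\gamma}$, which are weight $1/2$ forms on a congruence subgroup, and then matches the resulting unary theta series against the Weil-representation transformation law). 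Your fallback ``direct'' argument is also not a proof: the congruence $n\equiv\gamma^{2}\ (4N)$ together with $n\geq 0$ does \emph{not} confine the Fourier support to squares $n=n_{0}^{2}$; establishing that only square exponents can occur is precisely the nontrivial analytic content of Serre--Stark and cannot be read off from holomorphy at $\infty$. The membership and linear-independence parts of your sketch are fine.
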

	
	We remark that the above basis was already given in \cite{skoruppazagier}, p. 130, in the language of Jacobi forms. Next, we determine the Borcherds products associated with the above theta series.
	 
	 \begin{Proposition}
	 	\begin{enumerate}
			\item The Borcherds product of $\theta_{1/2,N}^{\sigma_{c}}$ for $c \mid \mid N$ is given by
			\[
			\Psi\left(\theta_{1/2,N}^{\sigma_{c}},z\right) = \eta(cz)\eta\left(\frac{N}{c}z\right).
			\]
			\item For $f \in H_{1/2,\rho_{N}}^{+}$ the Borcherds product of $f|U_{d}$ is given by
			\[
			\Psi\left(f|U_{d},z\right) = \Psi(f,dz).
			\]
		\end{enumerate}
	 \end{Proposition}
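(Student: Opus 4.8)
The plan is to evaluate both Borcherds products directly from the infinite product defining them, by computing the Fourier coefficients $a^{+}(n^{2},n)$ of the input forms and the associated Weyl vectors at $\infty$, and then recognizing the resulting $q$-product.

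For part~(1), the $\e_{\delta}$-component of $\theta_{1/2,N}^{\sigma_{c}}$ equals $\sum_{n\equiv\sigma_{c}(\delta)\,(2N)}e(n^{2}\tau/4N)$, so the exponent entering the Borcherds product is
\[
a_{\theta_{1/2,N}^{\sigma_{c}}}^{+}(n^{2},n)=\#\{\,k\in\{n,-n\}:\ k\equiv\sigma_{c}(n)\ (2N)\,\}.
\]
Since $\sigma_{c}(n)\equiv-n\ (2c)$, $\sigma_{c}(n)\equiv n\ (2N/c)$ and $\mathrm{lcm}(2c,2N/c)=2N$ (using $(c,N/c)=1$), a short computation shows this equals $\mathbf{1}[c\mid n]+\mathbf{1}[(N/c)\mid n]$ for $n\ge1$. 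Combining this with the value of the Weyl vector, which I would obtain from the formula recalled in the introduction (using that the $\Aut(D_{N})$-action is self-adjoint for the Petersson pairing) as $\rho_{\theta_{1/2,N}^{\sigma_{c}}}=(c+N/c)/24$, yields
\[
\Psi\bigl(\theta_{1/2,N}^{\sigma_{c}},z\bigr)=e\bigl(\tfrac{c+N/c}{24}\,z\bigr)\prod_{n\ge1}\bigl(1-e(nz)\bigr)^{\mathbf{1}[c\mid n]+\mathbf{1}[(N/c)\mid n]}.
\]
Reindexing $n=cm$ in the factors coming from $\mathbf{1}[c\mid n]$ and $n=(N/c)m$ in those coming from $\mathbf{1}[(N/c)\mid n]$ identifies the right-hand side with $\eta(cz)\,\eta\bigl(\tfrac{N}{c}z\bigr)$. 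Both sides are holomorphic on $\H$ and the products converge and agree for $\imag(z)\gg0$, so the identity extends to all of $\H$. (For $N=1$ this recovers the familiar $\Psi(\theta_{1/2,1},z)=\eta(z)^{2}$.)

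For part~(2) one argues in the same way. The operator $U_{d}$ acts on Fourier expansions by $a_{f|U_{d}}^{+}(n,\gamma)=a_{f}^{+}(n/d^{2},\gamma/d)$, understood to vanish unless $d^{2}\mid n$ and $d\mid\gamma$; it preserves $H_{1/2,\rho_{N}}^{+}$ as well as the reality and integrality conditions needed for $\Psi(f|U_{d},z)$ to be defined, and $a_{f|U_{d}}^{+}(0,0)=a_{f}^{+}(0,0)$. Hence $a_{f|U_{d}}^{+}(n^{2},n)=0$ unless $d\mid n$, and $a_{f|U_{d}}^{+}\bigl((dm)^{2},dm\bigr)=a_{f}^{+}(m^{2},m)$, so that
\[
\Psi(f|U_{d},z)=e\bigl(\rho_{f|U_{d}}\,z\bigr)\prod_{m\ge1}\bigl(1-e(mdz)\bigr)^{a_{f}^{+}(m^{2},m)},
\]
while $\Psi(f,dz)=e\bigl(d\,\rho_{f}\,z\bigr)\prod_{m\ge1}\bigl(1-e(mdz)\bigr)^{a_{f}^{+}(m^{2},m)}$. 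It then remains to check $\rho_{f|U_{d}}=d\,\rho_{f}$, which again follows from the Weyl vector formula together with the compatibility of $U_{d}$ (which replaces the lattice $L_{N}$ by $L_{Nd^{2}}$) with the relevant theta pairing. Comparing the two expressions gives $\Psi(f|U_{d},z)=\Psi(f,dz)$ for $\imag(z)\gg0$, hence everywhere.

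The step I expect to be the main obstacle is the bookkeeping for the Weyl vectors: pinning down the constant $(c+N/c)/24$ in part~(1) and the scaling $\rho_{f|U_{d}}=d\,\rho_{f}$ in part~(2) requires care with the normalizing factors in the Weyl vector formula and with the change of lattice under $U_{d}$; the coefficient computations at the indices $(n^{2},n)$ and the reindexing of the products are routine. An alternative route to part~(1) is to use that Borcherds products are $\Aut(D_{N})$-equivariant, so that $\Psi(\theta_{1/2,N}^{\sigma_{c}},z)$ is the Atkin--Lehner transform $\Psi(\theta_{1/2,N},z)\,|\,W_{c}$ and $\eta(z)\eta(Nz)\,|\,W_{c}$ is proportional to $\eta(cz)\,\eta\bigl(\tfrac{N}{c}z\bigr)$; but this still reduces to the single Weyl-vector computation establishing the base case $\Psi(\theta_{1/2,N},z)=\eta(z)\eta(Nz)$.
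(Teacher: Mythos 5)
Your proposal is correct and follows the same route as the paper, which simply notes that the result follows from the explicit actions of $\sigma_{c}$ and $U_{d}$ on Fourier expansions together with the definition of the Borcherds product; your computation of $a^{+}(n^{2},n)=\mathbf{1}[c\mid n]+\mathbf{1}[(N/c)\mid n]$ and of the Weyl vectors $(c+N/c)/24$ and $\rho_{f|U_{d}}=d\rho_{f}$ fills in exactly the details the paper leaves to the reader.
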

	 
	 \begin{proof}
	 	The result follows easily from the explicit actions of $\sigma_{c}$ and $U_{d}$ on Fourier expansions and the definition of the Borcherds product.
	 \end{proof}
	
	\begin{Corollary}\label{corollary eta products}
	For every divisor $d \mid N$ we have
	\[
	\Psi\left(\theta_{1/2,N/(d,N/d)^{2}}^{\sigma_{d/(d,N/d)}}|U_{(d,N/d)},z\right) = \eta(dz)\eta\left(\frac{N}{d}z\right).
	\]
	\end{Corollary}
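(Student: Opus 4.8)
The plan is to deduce the identity directly from the two parts of the preceding Proposition; the only nontrivial point is an elementary divisor computation. Set $e = (d, N/d)$, $N' = N/e^{2}$ and $c = d/e$. First I would check that $e^{2}\mid N$, that $c$ is an \emph{exact} divisor of $N'$, and that $N'/c = N/(de)$: comparing $p$-adic valuations, $v_{p}(e) = \min(v_{p}(d), v_{p}(N/d))$ for every prime $p$, so $e^{2}\mid N$ because $2\min(v_{p}(d),v_{p}(N/d)) \le v_{p}(d)+v_{p}(N/d) = v_{p}(N)$; moreover $c = d/e$ and $N/(de)$ have disjoint prime supports (at most one of $v_{p}(d)-v_{p}(e)$ and $v_{p}(N/d)-v_{p}(e)$ is positive), so $(c, N'/c) = 1$, and in particular $\sigma_{c} \in \Aut(D_{N'})$ is defined.

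Next, part~(1) of the Proposition, applied at level $N'$ with the exact divisor $c\mid\mid N'$, gives
\[
\Psi\bigl(\theta_{1/2,N'}^{\sigma_{c}}, z\bigr) = \eta(cz)\,\eta\!\left(\tfrac{N'}{c}z\right) = \eta\!\left(\tfrac{d}{e}z\right)\eta\!\left(\tfrac{N}{de}z\right).
\]
Since $\theta_{1/2,N'}^{\sigma_{c}} \in M_{1/2,\rho_{N'}} \subset H_{1/2,\rho_{N'}}^{+}$ and $U_{e}$ raises the level from $N' = N/e^{2}$ to $N$, part~(2) of the Proposition applies and yields
\[
\Psi\bigl(\theta_{1/2,N'}^{\sigma_{c}}\big|U_{e}, z\bigr) = \Psi\bigl(\theta_{1/2,N'}^{\sigma_{c}}, ez\bigr) = \eta\!\left(\tfrac{d}{e}\,ez\right)\eta\!\left(\tfrac{N}{de}\,ez\right) = \eta(dz)\,\eta\!\left(\tfrac{N}{d}z\right).
\]
Recalling that $e = (d,N/d)$, $N' = N/(d,N/d)^{2}$ and $c = d/(d,N/d)$, the left-hand side is precisely $\Psi\bigl(\theta_{1/2,N/(d,N/d)^{2}}^{\sigma_{d/(d,N/d)}}\big|U_{(d,N/d)}, z\bigr)$, which is the asserted identity.

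I do not expect a genuine obstacle here. The Borcherds product $\Psi(\theta_{1/2,N'}^{\sigma_{c}}, z)$ is well defined because $\theta_{1/2,N'}^{\sigma_{c}}$ is a holomorphic modular form with Fourier coefficients in $\{0,1\}$, so the relevant coefficients $a^{+}(n^{2},n)$ are real and the principal-part coefficients (those with $n<0$) vanish; everything else is the valuation bookkeeping of the first paragraph together with the substitution $z \mapsto ez$ coming from part~(2). The only step needing a line of care is the verification that $d/(d,N/d)$ is an exact divisor of $N/(d,N/d)^{2}$, without which part~(1) of the Proposition could not be invoked; this is exactly the disjoint-support argument above.
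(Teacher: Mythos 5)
Your proof is correct and is exactly the intended derivation: the paper presents this as an immediate corollary of the two parts of the preceding Proposition, obtained by applying part~(1) at level $N/(d,N/d)^{2}$ with the exact divisor $d/(d,N/d)$ and then part~(2) with $U_{(d,N/d)}$. Your careful verification that $d/(d,N/d)$ is indeed an exact divisor of $N/(d,N/d)^{2}$ is the one point the paper leaves implicit, and your valuation argument for it is right.
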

	
	\begin{Lemma}
		For every finite Laurent polynomial 
		\[
		P(\tau) = \sum_{\gamma \in D_{N}}\sum_{n < 0}a^{+}(n,\gamma)e\left(\frac{n\tau}{4N}\right)\e_{\gamma}
		\]
		with $a^{+}(n,\gamma) = a^{+}(n,-\gamma) \in \C$ and $a^{+}(n,\gamma) = 0$ unless $n \equiv \gamma^{2}(4N)$ there exists a harmonic Maass form $f \in H_{1/2,\rho_{N}}^{+}$ with principal part $P_{f} = P + \mathfrak{c}$ for some constant $\mathfrak{c} \in \C[D_{N}]$. If the coefficients $a^{+}(n,\gamma)$ for $n < 0, \gamma \in D_{N}$, are real, then we can choose $f$ such that all its coefficients $a_{f}^{\pm}(n,\gamma)$ are real for $n \in \Z, \gamma \in D_{N}$.
	\end{Lemma}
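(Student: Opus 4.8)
The plan is to first produce \emph{some} $f\in H_{1/2,\rho_N}^{+}$ with principal part $P+\mathfrak c$, and then, when $P$ has real coefficients, to symmetrize it into one whose Fourier coefficients are all real. For the existence I would use Maass--Poincar\'e series: for every $m>0$ and $\gamma\in D_N$ with $m\equiv\gamma^{2}\ (4N)$ there is a form $F_{m,\gamma}\in H_{1/2,\rho_N}^{+}$ whose principal part equals $e(-m\tau/4N)(\e_{\gamma}+\e_{-\gamma})+\mathfrak c_{m,\gamma}$ for some constant $\mathfrak c_{m,\gamma}\in\C[D_N]$; this is the weight $1/2$ analogue of the fact already used above in weight $3/2$ (cf.\ \cite{bruinierfunke04}, Proposition~3.11, together with the analytic continuation of the associated Hejhal--Poincar\'e series, which is needed since the weight is $\le 1$). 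Given $P$ I would then set
\[
f=\frac12\sum_{\gamma\in D_N}\sum_{n<0}a^{+}(n,\gamma)\,F_{-n,\gamma},
\]
a finite sum, the terms with $n\not\equiv\gamma^{2}\ (4N)$ being zero by hypothesis. Using $a^{+}(n,\gamma)=a^{+}(n,-\gamma)$ and reindexing $\gamma\mapsto-\gamma$ in half of the double sum, one checks at once that the principal part of $f$ is $P+\mathfrak c$ with $\mathfrak c=\tfrac12\sum_{\gamma}\sum_{n<0}a^{+}(n,\gamma)\,\mathfrak c_{-n,\gamma}$. (Alternatively, one can avoid Poincar\'e series altogether: the surjectivity of $\xi_{1/2}\colon H_{1/2,\rho_N}^{+}\to S_{3/2,\overline{\rho}_{N}}$, the Bruinier--Funke pairing relating the principal part of a form to $\xi_{1/2}$ of it, and Borcherds' obstruction criterion for $M_{1/2,\rho_N}^{!}$ together show that every admissible Laurent polynomial is the principal part of some element of $H_{1/2,\rho_N}^{+}$ up to its constant term. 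This also explains why, in contrast to the weakly holomorphic case, there is no obstruction here.)

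For the reality statement I would introduce the antilinear map $\iota$ on $H_{1/2,\rho_N}^{+}$ defined by $(\iota h)(\tau)=\overline{h(-\bar\tau)}$. Since $-\bar\tau\in\H$, since the weight $1/2$ Laplace operator and the growth condition at $\infty$ are invariant under $\tau\mapsto-\bar\tau$ followed by complex conjugation, and since every weight $1/2$ form for $\rho_N$ satisfies $h_{\gamma}=h_{-\gamma}$, one checks that $\iota$ maps $H_{1/2,\rho_N}^{+}$ to itself, commutes with $\xi_{1/2}$, and acts on Fourier coefficients simply by $a_{\iota h}^{\pm}(n,\gamma)=\overline{a_{h}^{\pm}(n,\gamma)}$. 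Assuming the $a^{+}(n,\gamma)$ are real, I would then replace the $f$ constructed above by $\tfrac12(f+\iota f)\in H_{1/2,\rho_N}^{+}$: its Fourier coefficients become $\Re\bigl(a_{f}^{\pm}(n,\gamma)\bigr)$, hence real, while its principal part becomes $\tfrac12\bigl((P+\mathfrak c)+(\overline P+\overline{\mathfrak c})\bigr)=P+\Re(\mathfrak c)$ because $P$ is real. This is exactly what is claimed.

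The bookkeeping in the first paragraph is routine, so the one point I expect to require care is the well-definedness of $\iota$, i.e.\ that $\overline{h(-\bar\tau)}$ again transforms under $\rho_N$ and not under $\overline{\rho}_{N}$. Transporting $h$'s transformation law through $\tau\mapsto-\bar\tau$ produces the phase $\overline{\sqrt{-\bar\tau}}/\sqrt{\tau}$, while conjugating the Weil representation matrix $\rho_N(\tilde S)$ produces a permutation $\gamma\mapsto-\gamma$ (harmless by the symmetry of weight $1/2$ forms) and a further unit phase; these two phases cancel precisely in signature $(2,1)$ --- which is exactly the condition under which weight $1/2$ forms for $\rho_N$ exist at all. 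If one prefers to sidestep this verification, it suffices to note that $H_{1/2,\rho_N}^{+}$ is spanned by forms with real Fourier coefficients: the $F_{m,\gamma}$ have real coefficients, since the generalized Kloosterman sums attached to the symmetrized seed $\e_{\gamma}+\e_{-\gamma}$ are real, and $M_{1/2,\rho_N}^{!}$ has a basis with rational coefficients by \cite{mcgraw} (or by the theta basis of Lemma~\ref{lemma basis unary theta series}); since $\overline{g(-\bar\tau)}=g(\tau)$ for any form $g$ with real Fourier coefficients, $\iota$ then stabilizes $H_{1/2,\rho_N}^{+}$ and the averaging argument goes through unchanged.
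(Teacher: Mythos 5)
Your proof is correct and follows essentially the same route as the paper: existence of $f$ with prescribed principal part via \cite{bruinierfunke04}, Proposition~3.11 (your Maass--Poincar\'e construction is exactly the content of that result), followed by averaging $f$ with the form whose Fourier coefficients are the complex conjugates of those of $f$ --- your $\iota f=\overline{f(-\bar\tau)}$ is precisely the form $f^{c}$ used in the paper. The only difference is that you verify in detail that $\iota$ preserves $H_{1/2,\rho_N}^{+}$, which the paper dismisses as straightforward.
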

	
%	\begin{Lemma}
%		For each $n < 0,\gamma \in D_{N}$ with $n \equiv -\gamma^{2} (2N)$ there exists a harmonic Maass form $P_{n,\gamma} \in H_{1/2,\rho_{N}}^{+}$ with principal part $\frac{1}{2}e\left(\frac{n\tau}{4N}\right)(\e_{\gamma} + \e_{-\gamma}) + \mathfrak{c}$ for some constant $\mathfrak{c} \in \C[D_{N}]$ such that all its coefficients $a_{P_{n,\gamma}}^{\pm}(m,\beta)$ are real for all $m \in \Z,\beta \in D_{N}$.
%	\end{Lemma}
	
	\begin{proof}
		By \cite{bruinierfunke04}, Proposition~3.11, there exists a harmonic Maass form $f \in H_{1/2,\rho_{N}}^{+}$ with the claimed principal part. Let $f^{c}$ be the function obtained by replacing all the coefficients $a_{f}^{\pm}(n,\gamma)$ of $f$ with their complex conjugates. It is straightforward to check that $f^{c} \in H_{1/2,\rho_{N}}^{+}$ is again a harmonic Maass form. Further, if the coefficients of negative index in the holomorphic part of $f$ are real, then $\frac{1}{2}(f+f^{c})$ has the same coefficients of negative index in the holomorphic part as $f$, and all its coefficients are real. 		
%		Alternatively, the functions $P_{n,\gamma}$ could be constructed as special values of Maass Poincar\'e series, similarly as in \cite{dukeimamoglutoth}.
	\end{proof}
	
	\textit{Proof of Theorem \ref{weak converse theorem}.} Let $F$ be a meromorphic modular form for $\Gamma_{0}(N)$ whose divisor on $Y_0(N)$ is a linear combination of Heegner divisors $Z(n,\gamma)$ and whose cuspidal divisor is defined over $\Q$ and invariant under the Fricke involution. By the last lemma we can find a harmonic Maass form $f_{1} \in H_{1/2,\rho_{N}}^{+}$ (unique up to addition of a holomorphic modular form in $M_{1/2,\rho_{N}}$) whose generalized Borcherds product $\Psi(f_{1},z)$ has the same divisor on $Y_0(N)$ as $F$. Hence the divisor of $\Psi(f_{1},z)/F(z)$ is supported at the cusps, defined over $\Q$, and Fricke-invariant. Since the order of a cusp $\frac{a}{c}$ with $c \mid N$ in such a divisor only depends on $c$, and the Fricke involution identifies $\frac{1}{c}$ with $\frac{1}{N/c}$, the $\Q$-vector space consisting of all such divisors has dimension
	\[
	\frac{1}{2}\left(\sigma_{0}(N) +\delta_{N = \square}\right),
	\] 
	where $\sigma_{0}(N)$ is the number of divisors of $N$ and $\delta_{N = \square}$ equals $1$ if $N$ is a square and $0$ otherwise. This is exactly the dimension of the $\Q$-vector space of divisors of the eta products $\eta(dz)\eta(Nz/d)$ for $d \mid N$, which arise as Borcherds products of unary theta series by Corollary~\ref{corollary eta products}. In particular, there exists a unique holomorphic modular form $f_{2} \in M_{1/2,\rho_{N}}$ such that $\Psi(f_{2},z)$ has the same divisor as $\Psi(f_{1},z)/F(z)$. Let $f = f_{1} - f_{2}$. Then the quotient $\Psi(f,z)/F(z)$ is a holomorphic modular form of some weight $k \in \R$ for $\Gamma_{0}(N)$ which does not have any zeros in $\H$ or at the cusps. Hence the function
	\[
	G(z) = 2\log \left(\Im(z)^{k/2}\left|\frac{\Psi(f,z)}{F(z)} \right|\right)
	\]
	is invariant under $\Gamma_{0}(N)$, satisfies $\Delta_{0}G(z) = k$, and is square-integrable on $X_{0}(N)$ with respect to $(f,g) = \int_{\Gamma_{0}(N)\backslash \H}f(z)\overline{g(z)}\frac{dx dy}{y^{2}}$. Using the fact that the Laplace operator is self-adjoint on $L^{2}(X_{0}(N))$, we obtain
	\[
	k^{2}\text{vol}(X_{0}(N)) = (k,k) = (\Delta_{0} G,k) = (G,\Delta_{0} k) = 0,
	\]
	which implies $k = 0$. Hence $G$ is harmonic without any singularities on $X_{0}(N)$. The maximum principle for harmonic functions on $X_{0}(N)$ implies that $G$ is a constant. Thus $\Psi(f,z)/F(z)$ is a holomorphic function with constant modulus, therefore a constant. This finishes the proof of Theorem~\ref{weak converse theorem}.

\section{Proof of Theorem \ref{finiteness theorem}}\label{section finiteness theorem}

We first prove some general lemmas about newforms in $S_{2-k,\overline{\rho}_{N}}^{\new}$ and harmonic Maass forms in $H_{k,\rho_{N}}^{!}$ for $k \in \frac{1}{2} + \Z$.

\begin{Lemma}\label{lemmainnerproducts}
	Let $g \in S_{2-k,\overline{\rho}_{N/m}}^{\new}$ with $m \mid N$ be a newform with algebraic coefficients. Then the Petersson inner products $(g|U_{d}V_{\ell},g|U_{d'}V_{\ell'})$ are algebraic multiples of $(g,g)$ for all positive integers $d,\ell,d',\ell'$ with $d^{2}\ell = d^{'2}\ell' = m$.
\end{Lemma}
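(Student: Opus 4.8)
The plan is to reduce everything to the action of the level-raising operators on a single Hecke eigenform and to the multiplicativity of Fourier coefficients. The key point is that $g \in S_{2-k,\overline{\rho}_{N/m}}^{\new}$ is a newform, hence a simultaneous eigenform for all Hecke operators $T_n$ with $(n,N/m)=1$, with algebraic eigenvalues (lying in the number field $K_g$ generated by its coefficients). Since $V_\ell$ and $U_d$ commute with one another and with the $T_n$ for $(n,N)=1$, each $g|U_dV_\ell$ with $d^2\ell=m$ is again an eigenform for these Hecke operators with the \emph{same} eigenvalues, so the whole $g$-isotypical component is spanned by the $g|U_dV_\ell$. The first step is therefore to compute, for fixed $\ell, \ell'$ and $d,d'$, the Petersson inner products $(g|U_dV_\ell, g|U_{d'}V_{\ell'})$ by expressing the adjoints of $U_d$ and $V_\ell$ with respect to the Petersson inner product in terms of Hecke operators and the same family of level-raising operators.

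Concretely, I would first record the adjointness relations: up to explicit algebraic (in fact rational, power-of-$\ell$ or power-of-$d$) normalizing constants, the adjoint of $V_\ell$ is a combination of $U_\ell$-type operators and Hecke operators $T_\ell$, and the adjoint of $U_d$ is $V_d$ times a power of $d$; these are the vector-valued translations of the classical formulas for Jacobi forms in \cite{eichlerzagier} and carry over verbatim. Composing these, $(g|U_dV_\ell, g|U_{d'}V_{\ell'})$ becomes $(g, g|S)$ for an operator $S$ built out of $U$'s, $V$'s and Hecke operators at primes dividing $m$ (which divides $N$, so these need not be prime to the level), times an algebraic constant. The second step is to observe that $g|S$, restricted to the $g$-isotypical component, is again a scalar multiple of a form in that component; the scalar is a polynomial expression in the Fourier coefficients of $g$ at integers dividing $m$ and in the Atkin--Lehner eigenvalues, all of which are algebraic because $g$ has algebraic coefficients. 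Pairing with $g$ and using that $(g, g|U_eV_{e'})$ is itself (by the same adjointness bookkeeping, or by a direct Fourier-coefficient computation using that distinct $g|U_dV_\ell$ are the Hecke translations of $g$) an algebraic multiple of $(g,g)$, we conclude.

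An alternative, perhaps cleaner, route for the last step: expand $g|U_dV_\ell$ and $g|U_{d'}V_{\ell'}$ in Fourier series using the explicit formulas from Section~\ref{section operators}; the inner product is computed by the usual Rankin--Selberg unfolding against the Fourier expansion, which expresses $(g|U_dV_\ell,g|U_{d'}V_{\ell'})$ as a finite sum of terms of the shape $c \cdot \sum_{n} a_g(\ast)\overline{a_g(\ast)}/n^{s}|_{s=\text{weight}-1}$ with $c$ algebraic, and the Dirichlet series obtained is, after using multiplicativity of the eigenform coefficients and the known Euler factors, an algebraic multiple of $(g,g)$ (the latter being itself a special value of the Rankin--Selberg $L$-function of $g$ against itself); the ratios of Euler factors at primes dividing $m$ are algebraic. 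I would choose whichever of these two presentations is shorter given the adjointness formulas already available in the paper.

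The main obstacle I anticipate is the bookkeeping at the bad primes, i.e.\ the primes dividing $m$: there the operators $U_d$, $V_\ell$ do not commute as cleanly with the "Hecke operator" $T_\ell$ (which at such primes is only the $U_\ell$-operator and is not self-adjoint), so writing down the adjoint of $V_\ell$ and simplifying the composite operator $S$ requires care, and one must check that every normalizing constant that appears is algebraic (it will in fact be rational, a product of prime powers) and that the coefficient extracted from $g|S$ is a polynomial in algebraic quantities rather than, say, an infinite sum. Once the finitely many commutation and adjointness identities at primes $p \mid m$ are pinned down — which is routine given the explicit Fourier expansions in Section~\ref{section operators} — the algebraicity of the ratio $(g|U_dV_\ell,g|U_{d'}V_{\ell'})/(g,g)$ follows formally.
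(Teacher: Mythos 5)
Your route is genuinely different from the paper's. The paper's proof is four lines: by Lemma~7.3 of \cite{bruinierono} there is a harmonic Maass form $f$ with \emph{algebraic principal part} and $\xi_{k}f=\|g\|^{-2}g$; the commutation relations of Section~\ref{section operators} give $g|U_{d'}V_{\ell'}=\|g\|^{2}\ell'^{\,1-k}\,\xi_{k}(f|U_{d'}V_{\ell'})$, and Stokes' theorem turns $(g|U_{d}V_{\ell},\xi_{k}(f|U_{d'}V_{\ell'}))$ into the finite pairing $\sum_{\gamma}\sum_{n\le 0}a_{g|U_{d}V_{\ell}}(-n,\gamma)a^{+}_{f|U_{d'}V_{\ell'}}(n,\gamma)$, which is algebraic since the principal part of $f$ and the coefficients of $g$ are. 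No adjoints, no unfolding, no multiplicity-one reduction of a composite operator is needed; the only nontrivial input is the existence of the $\xi$-preimage with algebraic principal part. Your proposal instead follows the classical Gross--Kohnen--Zagier/Skoruppa--Zagier computation of the Gram matrix of the $g|U_{d}V_{\ell}$, which can in principle be made to work but is a much heavier machine, and the adjoint formulas it rests on are neither in this paper nor derived by you.

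As written there are concrete gaps. First, the assertion that ``the adjoint of $U_{d}$ is $V_{d}$ times a power of $d$'' cannot be right even at the level of spaces: $U_{d}$ multiplies the index by $d^{2}$ while $V_{d}$ multiplies it by $d$, so $V_{d}$ does not map $S_{2-k,\overline{\rho}_{Nd^{2}}}$ back to $S_{2-k,\overline{\rho}_{N}}$; the true adjoints of $U_{d}$ and $V_{\ell}$ are more involved (and at primes dividing the level $V_{\ell}$ is not even normal), so the ``routine bookkeeping'' you defer is precisely where the work lies. Second, your concluding step is circular in flavor: after moving $U_{d'}V_{\ell'}$ across the inner product you invoke that $(g,g|U_{e}V_{e'})$ is an algebraic multiple of $(g,g)$ ``by the same adjointness bookkeeping,'' but that is an instance of the statement being proved and needs its own argument (e.g.\ multiplicity one at level $N/m$ forces $g|S=c\,g$ and one must exhibit $c$ as an explicit algebraic expression in the coefficients of $g$ by comparing Fourier expansions). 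Third, the Rankin--Selberg alternative requires justifying the evaluation at the edge of convergence and the nonvanishing of the Euler-factor denominators, none of which is addressed. So the proposal is a plausible programme rather than a proof; I recommend you compare it with the paper's $\xi$-operator argument, which makes all of these issues disappear.
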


\begin{proof}
	Since $g$ is a newform with algebraic coefficients, Lemma~7.3 in \cite{bruinierono} tells us that there exists a harmonic Maass form $f \in H_{k,\rho_{N}}^{+}$ with algebraic principal part such that $\xi_{k}f = \|g\|^{-2}g$. We obtain by Stokes' theorem (compare \cite{bruinierfunke04}, Proposition~3.5)
	\begin{align*}
	(g|U_{d}V_{\ell},g|U_{d'}V_{\ell'}) &= \|g\|^{2}\ell^{'1-k}(g|U_{d}V_{\ell},\xi_{k}(f|U_{d'}V_{\ell'}))  \\
	&= \|g\|^{2}\ell^{'1-k}\sum_{\gamma \in D_{N}}\sum_{n \leq 0}a_{g|U_{d}V_{\ell}}(-n,\gamma)a_{f|U_{d'}V_{\ell'}}^{+}(n,\gamma).
	\end{align*}
	From the explicit action of $U_{d}$ and $V_{\ell}$ given in Section \ref{section operators} and the fact that the principal part of $f$ and the coefficients of $g$ are algebraic, we see that the right-hand is an algebraic multiple of $\|g\|^{2} = (g,g)$.
\end{proof}

\begin{Lemma}\label{lemmaorthogonalbasis}
	Every simultaneous eigenspace of the Hecke operators $T_{n}$ with $(n,N) = 1$ in $S_{2-k,\overline{\rho}_{N}}$ has an orthogonal basis consisting of cusp forms with algebraic Fourier coefficients.
\end{Lemma}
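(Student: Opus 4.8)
The plan is to reduce the statement to a single Hecke-isotypical component --- spanned by the oldforms attached to one newform --- and then to orthogonalize that explicit basis by Gram--Schmidt, using Lemma~\ref{lemmainnerproducts} to guarantee that algebraicity of the Fourier coefficients is not destroyed in the process.

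First I would identify the eigenspaces. Since the Hecke algebra generated by the $T_n$ with $(n,N) = 1$ acts semisimply on $S_{2-k,\overline{\rho}_N}$, every simultaneous eigenspace $V$ is one of the isotypical summands in the new-old decomposition of $S_{2-k,\overline{\rho}_N}$ (which for $2-k = 3/2$ is \eqref{eq direct sum decomposition}, the general half-integral weight case being analogous), and by strong multiplicity one it is the isotypical component attached to a unique newform $G$ of some level $N/m$ with $m\mid N$. Hence $V$ is spanned by the forms $b_{d,\ell}:=g|U_dV_\ell$ with $d^2\ell=m$, where $g\in S_{2-k,\overline{\rho}_{N/m}}^{\new}$ is the newform corresponding to $G$ under the Shimura correspondence. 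The form $g$ can be normalized to have algebraic Fourier coefficients, since its Hecke eigenvalues --- and hence its coefficients --- generate a number field (compare Theorem~\ref{theorem bruinierono2} and Lemma~7.3 of \cite{bruinierono}). Because $U_d$ and $V_\ell$ act on Fourier expansions by the rational formulas recalled in Section~\ref{section operators}, each $b_{d,\ell}$ then has algebraic Fourier coefficients, so $\mathcal{B}=\{b_{d,\ell}: d^2\ell=m\}$ is a basis of $V$ consisting of forms with algebraic Fourier coefficients; only orthogonality still has to be arranged.

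Next I would orthogonalize $\mathcal{B}$. By Lemma~\ref{lemmainnerproducts} every Petersson inner product $(b_{d,\ell},b_{d',\ell'})$ is an algebraic multiple of the single positive real number $(g,g)$, so the Gram matrix of $\mathcal{B}$ equals $(g,g)$ times a Hermitian matrix with algebraic entries. Ordering $\mathcal{B}$ as $(b_1,\dots,b_r)$ and forming the Gram--Schmidt orthogonalization $w_1=b_1$, $w_j=b_j-\sum_{i<j}(b_j,w_i)(w_i,w_i)^{-1}w_i$, positive-definiteness of the Petersson product together with linear independence of the $b_j$ gives $w_j\neq 0$ and $(w_j,w_j)>0$, and an induction on $j$ shows that $w_j$ is an algebraic linear combination of $b_1,\dots,b_j$: granting this for the indices below $j$, both $(b_j,w_i)$ and $(w_i,w_i)$ are $(g,g)$ times an algebraic number, the factor $(g,g)$ cancels in the quotient $(b_j,w_i)(w_i,w_i)^{-1}$, and $w_j$ comes out as an algebraic combination of $b_j$ and the earlier $w_i$. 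Thus $(w_1,\dots,w_r)$ is an orthogonal basis of $V$ with algebraic Fourier coefficients.

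The step I expect to carry the real weight is the use of Lemma~\ref{lemmainnerproducts}: for a generic basis of Hecke eigenforms with algebraic Fourier coefficients the Gram matrix involves transcendental periods and Gram--Schmidt would not preserve algebraicity, so it is precisely the collapse of all the relevant periods onto the single quantity $(g,g)$ that makes the argument work. The only remaining point needing a little care is the new-old structure and the algebraicity of newform coefficients in weight $2-k$ for general $k\in\frac12+\Z$, but this is parallel to the weight $3/2$ situation already used in the body of the paper and presents no essential difficulty.
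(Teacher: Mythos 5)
Your proposal is correct and follows essentially the same route as the paper: decompose via \eqref{eq direct sum decomposition} and multiplicity one to get the basis $g|U_dV_\ell$ with $g$ normalized algebraically, then apply unnormalized Gram--Schmidt and use Lemma~\ref{lemmainnerproducts} to see inductively that the resulting orthogonal basis consists of algebraic linear combinations of the $g|U_dV_\ell$. Your explicit remark that $(g,g)$ cancels in the quotients $(b_j,w_i)(w_i,w_i)^{-1}$ is exactly the point the paper leaves implicit.
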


\begin{proof}
	By the direct sum decomposition \eqref{eq direct sum decomposition} of $S_{2-k,\overline{\rho}_{N}}$ and the multiplicity one theorem, every eigenspace has a basis consisting of the forms $g|U_{d}V_{\ell}$ for some newform $g \in S_{2-k,\overline{\rho}_{N/m}}^{\new}$ with $m \mid N$, where $\ell$ and $d$ run through the positive integers with $d^{2}\ell = m$. If we normalize $g$ to have algebraic coefficients, then the functions $g|U_{d}V_{\ell}$ have algebraic coefficients as well. We apply the Gram-Schmidt orthogonalization procedure (without normalization) to this basis. Using the last lemma, it follows by induction that the cusp forms in the resulting orthogonal basis are algebraic linear combinations of the $g|U_{d}V_{\ell}$ and hence have algebraic coefficients.
\end{proof}

\begin{Lemma}\label{lemmaxipreimage}
	Let $g \in S_{2-k,\overline{\rho}_{N}}$ be a cusp form with algebraic Fourier coefficients and assume that $g$ is an eigenform of all Hecke operators $T_{n}$ with $(n,N) = 1$. Then there exists a harmonic Maass form $f \in H_{k,\rho_{N}}^{+}$ with algebraic principal part such that
	\[
	\xi_{k}f = \|g\|^{-2}g.
	\]
\end{Lemma}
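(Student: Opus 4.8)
The plan is to reduce the statement about an arbitrary Hecke eigenform $g$ (with eigenvalues only for $T_n$, $(n,N)=1$) to the newform case, where the desired preimage is already provided by Lemma~7.3 of \cite{bruinierono}, and then to track algebraicity through the level-raising operators $U_d$ and $V_\ell$. First I would invoke the direct sum decomposition \eqref{eq direct sum decomposition} together with multiplicity one: since $g$ is an eigenform of all $T_n$ with $(n,N)=1$ and has algebraic coefficients, it lies in a single $G$-isotypical component, and hence can be written as $g = \sum_{d^2\ell = m} c_{d,\ell}\, g_0|U_d V_\ell$ for a newform $g_0 \in S_{2-k,\overline{\rho}_{N/m}}^{\new}$ with some $m \mid N$, where (after normalizing $g_0$ to have algebraic coefficients) the coefficients $c_{d,\ell}$ are algebraic, because they can be recovered from finitely many Fourier coefficients of $g$ by inverting a matrix with algebraic entries.

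Next I would produce a preimage of each $g_0|U_d V_\ell$. By Lemma~7.3 of \cite{bruinierono} there is a harmonic Maass form $f_0 \in H_{k,\rho_{N/m}}^{+}$ with algebraic principal part and $\xi_k f_0 = \|g_0\|^{-2} g_0$. Applying $U_d V_\ell$ and using the commutation relations from Section~\ref{section operators}, namely $\xi_k(f_0|U_d) = (\xi_k f_0)|U_d$ and $\xi_k(f_0|V_\ell) = \ell^{k-1}(\xi_k f_0)|V_\ell$, we get $\xi_k(f_0|U_dV_\ell) = \ell^{k-1}\|g_0\|^{-2}\, g_0|U_dV_\ell$, and $f_0|U_dV_\ell$ still has algebraic principal part since $U_d$, $V_\ell$ act on Fourier expansions by formulas with algebraic (indeed rational) coefficients. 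Now set
\[
f = \|g\|^{2}\sum_{d^2\ell = m} \overline{c_{d,\ell}}\,\ell^{1-k}\,\|g_0\|^{-2}\, f_0|U_dV_\ell ,
\]
so that $\xi_k f = \|g\|^2 \|g_0\|^{-2}\sum_{d,\ell} \overline{c_{d,\ell}}\cdot \overline{c_{d,\ell}}^{-1}\cdots$ — wait, here I must be careful: $\xi_k$ is antilinear, so scaling $f_0|U_dV_\ell$ by a constant $\lambda$ multiplies $\xi_k$ of it by $\overline{\lambda}$. Thus I would instead take $f = \sum_{d,\ell} \overline{c_{d,\ell}}\,\ell^{1-k}\,\|g\|^{2}\|g_0\|^{-2}\, f_0|U_dV_\ell$, and then $\xi_k f = \sum_{d,\ell} c_{d,\ell}\, \ell^{k-1}\cdot \ell^{1-k}\,\|g\|^{2}\|g_0\|^{-2}\cdot \|g_0\|^{-2}$ — the bookkeeping of the norm factors is the one routine point to get right. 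Cleanly: since $\xi_k(f_0|U_dV_\ell) = \ell^{k-1}\|g_0\|^{-2}g_0|U_dV_\ell$, the form $h_{d,\ell} := \ell^{1-k}\|g_0\|^{2}\, f_0|U_dV_\ell$ satisfies $\xi_k h_{d,\ell} = g_0|U_dV_\ell$ and has algebraic principal part; then $f := \|g\|^{-2}\sum_{d,\ell}\overline{c_{d,\ell}}\, h_{d,\ell}$ has algebraic principal part and $\xi_k f = \|g\|^{-2}\sum_{d,\ell} c_{d,\ell}\, g_0|U_dV_\ell = \|g\|^{-2} g$, as desired.

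The only genuine subtlety, and the step I expect to need the most care, is the algebraicity of the coefficients $c_{d,\ell}$ in the expansion $g = \sum c_{d,\ell}\, g_0|U_dV_\ell$: one must argue that the change-of-basis matrix from the $g_0|U_dV_\ell$ to a set of normalized Fourier coefficients of members of the isotypical component is defined over $\overline{\Q}$ and invertible, which follows because the $g_0|U_dV_\ell$ are linearly independent (by \eqref{eq direct sum decomposition}) with algebraic Fourier coefficients, so the Gram-type or Vandermonde-type matrix one inverts already has algebraic entries. Everything else — the antilinearity bookkeeping and the rationality of the $U_d$, $V_\ell$ coefficient formulas — is routine given Section~\ref{section operators} and Lemma~7.3 of \cite{bruinierono}.
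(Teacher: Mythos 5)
Your overall strategy is exactly the paper's: decompose $g = \sum_{d^{2}\ell = m} c_{d,\ell}\, g_{0}|U_{d}V_{\ell}$ via \eqref{eq direct sum decomposition} and multiplicity one, observe that linear independence plus algebraic Fourier coefficients forces the $c_{d,\ell}$ to be algebraic, pull back $g_{0}$ to $f_{0}$ by Lemma~7.3 of \cite{bruinierono}, and assemble the preimage with the correct conjugates and powers of $\ell$ to account for the antilinearity of $\xi_{k}$ and the commutation relation $\xi_{k}(f|V_{\ell}) = \ell^{k-1}(\xi_{k}f)|V_{\ell}$. That bookkeeping is correct as you finally state it.

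However, there is a genuine gap at the step you treat as routine. You claim that $h_{d,\ell} := \ell^{1-k}\|g_{0}\|^{2}\, f_{0}|U_{d}V_{\ell}$ has algebraic principal part, and hence that $f = \|g\|^{-2}\sum_{d,\ell}\overline{c_{d,\ell}}\,h_{d,\ell}$ does too. But $f_{0}|U_{d}V_{\ell}$ is what has algebraic principal part; multiplying by the Petersson norm $\|g_{0}\|^{2}$ (a period, not an algebraic number in general) destroys that, and the principal part of your $f$ carries the overall factor $\|g\|^{-2}\|g_{0}\|^{2}$. The lemma therefore only follows if you know that the ratio $\|g_{0}\|^{2}/\|g\|^{2}$ is algebraic, and this is not obvious: it is precisely the content of Lemma~\ref{lemmainnerproducts}, which the paper proves via Stokes' theorem applied to $f_{0}|U_{d'}V_{\ell'}$, showing that each inner product $(g_{0}|U_{d}V_{\ell}, g_{0}|U_{d'}V_{\ell'})$ is an algebraic multiple of $\|g_{0}\|^{2}$; expanding $\|g\|^{2}$ bilinearly in the $c_{d,\ell}$ then gives $\|g\|^{2} \in \overline{\Q}\cdot\|g_{0}\|^{2}$. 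You flag the algebraicity of the $c_{d,\ell}$ as the ``only genuine subtlety,'' but that part is the easy linear algebra; the missing ingredient is the comparison of the two Petersson norms, without which the constructed $f$ is only known to have a principal part that is an unknown complex multiple of an algebraic one.
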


\begin{Remark}
	This result generalizes Lemma~7.3 in \cite{bruinierono}, where $g$ was assumed to be a newform.
\end{Remark}

\begin{proof}
	By the direct sum decomposition \eqref{eq direct sum decomposition} of $S_{2-k,\overline{\rho}_{N}}$ and the multiplicity one theorem, the assumption that $g$ is a simultaneous Hecke eigenform implies that there exists a newform $g' \in S_{2-k,\overline{\rho}_{N/m}}^{\new}$ (normalized to have algebraic coefficients) for some $m \mid N$ such that
	\[
	g = \sum_{\substack{d,\ell > 0 \\ d^{2}\ell = m}} a_{d,\ell}g'|U_{d}V_{\ell}
	\]
	with suitable coefficients $a_{d,\ell} \in \C$. Since the functions $g'|U_{d}V_{\ell}$ are linearly independent and have algebraic coefficients, and $g$ has algebraic coefficients by assumption, the $a_{d,\ell}$ are algebraic. Lemma~7.3 in \cite{bruinierono} yields some $f' \in H_{k,\rho_{N/m}}^{+}$ with algebraic principal part and $\xi_{k}f' = \|g'\|^{-2}g'$. Using Lemma \ref{lemmainnerproducts} above we obtain that $\|g\|^{-2}\|g'\|^{2}$ is algebraic.
	In particular, the harmonic Maass form
	\[
	f := \|g\|^{-2}\|g'\|^{2}\ell^{1-k}\sum_{\substack{d,\ell > 0 \\ d^{2}\ell = m}} \overline{a}_{d,\ell}f'|U_{d}V_{\ell}
	\]
	has algebraic principal part and is mapped to $\|g\|^{-2}g$ under $\xi_{k}$.
\end{proof}

\begin{Lemma}\label{lemma basis splitting}
	Let $f \in H_{k,\rho_{N}}^{+}$ be a harmonic Maass form with algebraic principal part. Write
	\[
	\xi_{k}f = \sum_{i = 1}^{m}g_{i}
	\]
	with cusp forms $g_{i} \in S_{2-k,\overline{\rho}_{N}}$ which are eigenforms under all Hecke operators $T_{n}$ with $(n,N) =1$ and which lie in distinct eigenspaces. Then we can find harmonic Maass forms $f_{i} \in H_{k,\rho_{N}}^{+}$ with algebraic principal parts such that $\xi_{k}f_{i} = g_{i}$ and
	\[
	f = \sum_{i=1}^{m}f_{i}.
	\]
%	Further, the divisor $y(f)$ is torsion in the Jacobian of $X_{0}(N)$ if and only if all $y(f_{i})$ are torsion.
\end{Lemma}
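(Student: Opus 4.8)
The key tool is the preceding Lemma~\ref{lemmaxipreimage}: given a Hecke eigenform $g \in S_{2-k,\overline{\rho}_N}$ with algebraic coefficients, there is an $f \in H_{k,\rho_N}^+$ with algebraic principal part and $\xi_k f = \|g\|^{-2}g$. The obstruction to applying this directly is that the given $g_i$ are eigenforms but need not have algebraic coefficients, and need not even lie in a one-dimensional eigenspace; so the plan is to first replace the decomposition $\xi_k f = \sum_i g_i$ by a finer one adapted to an \emph{algebraic} orthogonal basis of each eigenspace, construct $\xi_k$-preimages there, and then recombine.

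First I would invoke Lemma~\ref{lemmaorthogonalbasis}: for each $i$, the eigenspace $V_i \subseteq S_{2-k,\overline{\rho}_N}$ containing $g_i$ has an orthogonal basis $\{h_{i,1},\dots,h_{i,r_i}\}$ of Hecke eigenforms (for $(n,N)=1$) with algebraic Fourier coefficients, and since the eigenspaces are mutually orthogonal, $\bigcup_i\{h_{i,j}\}$ is an orthogonal basis of $\bigoplus_i V_i$. Because the $h_{i,j}$ are pairwise orthogonal, writing $\xi_k f = \sum_{i,j} c_{i,j} h_{i,j}$ we get $c_{i,j} = (\xi_k f, h_{i,j})/\|h_{i,j}\|^2$. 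Here the key algebraicity input is the pairing formula coming from Stokes' theorem (as in the proof of Lemma~\ref{lemmainnerproducts} and \cite{bruinierfunke04}, Proposition~3.5): $(\xi_k f, h_{i,j})$ equals a finite sum of products of the algebraic principal-part coefficients of $f$ with the algebraic coefficients of $h_{i,j}$, hence is algebraic; and $\|h_{i,j}\|^2$ is, up to an algebraic factor (Lemma~\ref{lemmainnerproducts}), the norm-squared of the underlying newform. Thus each ratio $c_{i,j}\|h_{i,j}\|^2$ is algebraic. Now apply Lemma~\ref{lemmaxipreimage} to each $h_{i,j}$ to get $f_{i,j} \in H_{k,\rho_N}^+$ with algebraic principal part and $\xi_k f_{i,j} = \|h_{i,j}\|^{-2} h_{i,j}$, and set
\[
f_i := \sum_{j=1}^{r_i} c_{i,j}\|h_{i,j}\|^2 f_{i,j}.
\]
Since $c_{i,j}\|h_{i,j}\|^2$ is algebraic, $f_i$ has algebraic principal part, and $\xi_k f_i = \sum_j c_{i,j} h_{i,j} = g_i$ by construction (note $g_i$ is exactly the component of $\xi_k f$ in $V_i$).

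Finally I would check that $f = \sum_i f_i$. The difference $f - \sum_i f_i$ lies in $H_{k,\rho_N}^+$ and is killed by $\xi_k$, hence is a weakly holomorphic form in $M_{k,\rho_N}^!$; moreover its principal part is algebraic (difference of algebraic principal parts). By Proposition~\ref{proposition rational basis} (applied with $K$ the relevant number field, using also the Remark covering the $\overline{\rho}_N$ case) — or more simply by subtracting off the unique such weakly holomorphic form and absorbing it into one of the $f_i$ — we may adjust so that equality holds on the nose. Concretely: let $\phi := f - \sum_i f_i \in M_{k,\rho_N}^!$ with algebraic principal part; replace $f_1$ by $f_1 + \phi$. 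Then $\xi_k(f_1+\phi) = g_1 + 0 = g_1$, the principal part of $f_1 + \phi$ is still algebraic, and now $f = \sum_i f_i$ exactly. This completes the construction.

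The main obstacle is the algebraicity bookkeeping in the middle paragraph: one must be careful that passing from the abstract eigenform $g_i$ to the algebraic basis $\{h_{i,j}\}$ does not secretly introduce transcendental coefficients $c_{i,j}$, and that the Stokes pairing genuinely outputs algebraic numbers. Both are handled by the same mechanism already used in Lemmas~\ref{lemmainnerproducts} and~\ref{lemmaxipreimage} — the Fourier expansion of $\xi_k$ of a harmonic Maass form with algebraic principal part pairs algebraically against algebraic cusp forms — so the proof is essentially an assembly of those lemmas rather than a new idea. A minor point to state cleanly is that since the $g_i$ lie in distinct Hecke eigenspaces, $g_i$ is precisely the projection of $\xi_k f$ onto $\bigoplus_j \mathbb{C}h_{i,j} = V_i$, so no cross terms appear.
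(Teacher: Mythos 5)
Your proof is correct and follows essentially the same route as the paper: decompose $\xi_k f$ over an algebraic orthogonal basis of each eigenspace (Lemma~\ref{lemmaorthogonalbasis}), show the coefficients times $\|h_{i,j}\|^2$ are algebraic via Stokes' theorem, pull back with Lemma~\ref{lemmaxipreimage}, and absorb the leftover weakly holomorphic form with algebraic principal part into the $f_i$ (the paper splits it as $\tfrac{1}{m}\tilde f$ over all $i$ rather than dumping it into $f_1$, which is immaterial). The only slip is that $\xi_k$ is antilinear, so the correct scalar in front of $f_{i,j}$ is $\overline{c}_{i,j}\|h_{i,j}\|^2$ rather than $c_{i,j}\|h_{i,j}\|^2$; since algebraicity is preserved under conjugation this does not affect the argument.
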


\begin{proof}
	For fixed $i$ we let $\{g_{i,j}\}_{j = 1,\dots,m_{i}} \subset S_{2-k,\overline{\rho}_{N}}$ be an orthogonal basis with algebraic coefficients of the eigenspace in which $g_{i}$ lies, compare Lemma \ref{lemmaorthogonalbasis} above. Note that the functions $g_{i,j}$ are pairwise orthogonal for all $i,j$. We can write
	\[
	\xi_{k}f = \sum_{i=1}^{m}g_{i} = \sum_{i=1}^{m}\sum_{j=1}^{m_{i}}a_{i,j}g_{i,j}
	\]
	with some $a_{i,j} \in \C$. Using Stokes' theorem (see \cite{bruinierfunke04}, Proposition~3.5) we see that
	\[
	\overline{a}_{i,j}\|g_{i,j}\|^{2} = (g_{i,j},\xi_{k}f)  = \sum_{\gamma \in D_{N}}\sum_{n \leq 0}a_{g_{i,j}}(-n,\gamma)a_{f}^{+}(n,\gamma).
	\]
	The right-hand side is algebraic, hence $\overline{a}_{i,j}\|g_{i,j}\|^{2}$ is algebraic as well. By Lemma~\ref{lemmaxipreimage} we can find harmonic Maass forms $f_{i,j} \in H_{k,\rho_{N}}^{+}$ with algebraic principal parts such that $\xi_{k}f_{i,j} = \|g_{i,j}\|^{-2}g_{i,j}$. We obtain
	\[
	f = \sum_{i=1}^{m}\sum_{j=1}^{m_{i}}\overline{a}_{i,j}\|g_{i,j}\|^{2}f_{i,j} + \tilde{f}
	\]
	for some weakly holomorphic modular form $\tilde{f} \in M_{k,\rho_{N}}^{!}$. Since $f$ and all $f_{i,j}$ have algebraic principal parts and the values $\overline{a}_{i,j}\|g_{i,j}\|^{2}$ are algebraic, the principal part of $\tilde{f}$ is algebraic as well. Then the harmonic Maass forms
	\[
	f_{i} := \sum_{j=1}^{m_{i}}\overline{a}_{i,j}\|g_{i,j}\|^{2}f_{i,j}+\frac{1}{m}\tilde{f}
	\]
	have algebraic principal parts, map to $g_{i}$ under $\xi_{k}$, and $f = \sum_{i=1}^{m}f_{i}$. This finishes the proof.
\end{proof}

\begin{Proposition}
	In the situation of Lemma~\ref{lemma basis splitting}, assume that $k = 1/2$, and that $f$ and the $f_{i}$ are orthogonal to cusp forms. Then the coefficients $a_{f}^{+}(n^{2},n)$ are algebraic for all $n \in \Z$ if and only if the coefficients $a_{f_{i}}^{+}(n^{2},n)$ are algebraic for all $n \in \Z$ and all $i$.
\end{Proposition}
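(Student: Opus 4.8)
The plan is as follows. The implication $(\Leftarrow)$ is trivial: since $f = \sum_{i=1}^{m} f_{i}$ we have $a_{f}^{+}(n^{2},n) = \sum_{i=1}^{m} a_{f_{i}}^{+}(n^{2},n)$, which is algebraic as soon as each summand is. (As $k=1/2$, the transformation law gives $a_{f}^{+}(n^{2},n) = a_{f}^{+}(n^{2},-n)$, and $a_{f}^{+}(0,0)$ is a principal-part coefficient, hence algebraic by hypothesis; so only the coefficients with $n \geq 1$ are at stake, both for $f$ and for each $f_{i}$.)

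For $(\Rightarrow)$ I would recover each $f_{i}$ from $f$, up to a weakly holomorphic form with algebraic Fourier coefficients, by applying a suitable element of the Hecke algebra. Since the $g_{i} = \xi_{1/2}f_{i}$ lie in pairwise distinct simultaneous eigenspaces of the operators $T_{n}$ with $(n,N)=1$, and these operators are self-adjoint, commute, and hence act semisimply on $S_{3/2,\overline{\rho}_{N}}$, there is for each $i$ a finite linear combination $\Pi_{i} = \sum_{\nu} \alpha_{i,\nu} T_{n_{i,\nu}}$ of such Hecke operators, with algebraic coefficients $\alpha_{i,\nu}$, satisfying $g_{j}|\Pi_{i} = \delta_{ij}\,g_{i}$ for all $j$; here one uses that the Hecke eigenvalue systems occurring in $\xi_{1/2}f$ are algebraic (being those of weight~$2$ newforms via the Shimura correspondence), so that the spectral projectors are defined over $\overline{\Q}$. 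The one subtlety is that $\xi_{1/2}$ does not commute with $T_{n}$, but only satisfies $\xi_{1/2}(h|T_{n}) = n^{-1}(\xi_{1/2}h)|T_{n}$; to compensate I pass to the twisted combination $\tilde{\Pi}_{i} := \sum_{\nu} \alpha_{i,\nu}\,n_{i,\nu}\,T_{n_{i,\nu}}$, which again maps $H_{1/2,\rho_{N}}^{+}$ to itself and satisfies
\[
\xi_{1/2}\!\left(f|\tilde{\Pi}_{i}\right) = \sum_{j=1}^{m} (\xi_{1/2}f_{j})|\Pi_{i} = g_{i} = \xi_{1/2}(f_{i}).
\]
Consequently $u_{i} := f|\tilde{\Pi}_{i} - f_{i}$ lies in $M_{1/2,\rho_{N}}^{!}$.

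It then remains to show that both $u_{i}$ and $f|\tilde{\Pi}_{i}$ have algebraic diagonal coefficients $a^{+}(n^{2},n)$. For $u_{i}$ I would check the two hypotheses of Proposition~\ref{proposition rational basis}: its principal part is algebraic, since those of $f$ and of $f_{i}$ are, and the explicit Hecke formulas from Section~\ref{section operators} express the principal-part coefficients of $f|T_{n}$ as $\overline{\Q}$-linear combinations of principal-part coefficients of $f$; and $u_{i}$ is orthogonal to cusp forms, since $f$ and $f_{i}$ are and the operators $T_{n}$ with $(n,N)=1$ are self-adjoint for the regularized Petersson pairing, so that $f|\tilde{\Pi}_{i}$ is orthogonal to cusp forms too. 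Proposition~\ref{proposition rational basis} then yields that all Fourier coefficients of $u_{i}$ are algebraic. For $f|\tilde{\Pi}_{i}$, the same explicit formulas show that each $T_{n}$ carries the diagonal coefficients $a^{+}(m^{2},m)$ into $\overline{\Q}$-linear combinations of diagonal coefficients, so algebraicity of all $a_{f}^{+}(m^{2},m)$ propagates to algebraicity of all $a_{f|\tilde{\Pi}_{i}}^{+}(n^{2},n)$. Combining, $a_{f_{i}}^{+}(n^{2},n) = a_{f|\tilde{\Pi}_{i}}^{+}(n^{2},n) - a_{u_{i}}^{+}(n^{2},n)$ is algebraic for every $n$ and every $i$, which completes $(\Rightarrow)$. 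The main obstacle is precisely the non-commutation of $\xi_{1/2}$ with the Hecke operators at weight $1/2$, which forces the passage from $\Pi_{i}$ to $\tilde{\Pi}_{i}$; a minor but essential point is the self-adjointness of the $T_{n}$ with respect to the regularized Petersson inner product, which can be extracted from the Stokes-theorem formula for this pairing used repeatedly above (cf.~\cite{bruinierono}).
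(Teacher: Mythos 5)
Your argument is correct, but it isolates the components $f_{i}$ by a different mechanism than the paper does. The paper fixes an index $(n^{2},n)$ and, for each prime $p\nmid N$, derives an equation $a_{f^{(p)}}^{+}(n^{2},n)=\sum_{i}\lambda_{p}(g_{i})\,a_{f_{i}}^{+}(n^{2},n)$ with algebraic left-hand side (after removing a weakly holomorphic error term via Proposition~\ref{proposition rational basis}); it then solves the resulting linear system, invoking distinctness of the eigensystems for uniqueness and hence algebraicity of the solution. You instead construct explicit spectral projectors $\Pi_{i}$ in the Hecke algebra with algebraic coefficients, twist them by $T_{n}\mapsto nT_{n}$ to compensate for $\xi_{1/2}(h|T_{n})=n^{-1}(\xi_{1/2}h)|T_{n}$, and recover each $f_{i}$ from $f|\tilde{\Pi}_{i}$ up to a weakly holomorphic form that Proposition~\ref{proposition rational basis} shows has algebraic coefficients. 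Both proofs rest on the same ingredients: distinctness and algebraicity of the Hecke eigensystems, Proposition~\ref{proposition rational basis}, and the fact that $T_{n}$ carries diagonal coefficients to algebraic combinations of diagonal coefficients while preserving algebraicity of principal parts and orthogonality to cusp forms. Your version is in one respect more robust: by allowing arbitrary elements of the Hecke algebra (equivalently, all $T_{n}$ with $(n,N)=1$, since products of the $T_{p}$ reduce to linear combinations of these) you obtain the separation of the characters directly from linear independence of distinct characters, whereas the paper's linear system uses only the operators $T_{p}$ at single primes and its uniqueness claim implicitly requires that these already separate the eigensystems. The one step you should spell out is the preservation of orthogonality to cusp forms under $T_{n}$, $(n,N)=1$ (self-adjointness up to a scalar for the regularized pairing); the paper's own proof relies on exactly the same fact, so this is a shared assumption rather than a gap particular to your argument.
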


\begin{proof}
	Let us assume that the coefficients $a_{f}^{+}(n^{2},n)$ are algebraic for all $n \in \Z$. By the last lemma, we know that
	\[
	f = \sum_{i=1}^{m}f_{i}.
	\]
	We apply the Hecke operator $T_{p}$ for every prime $p$ with $(p,N) = 1$, and obtain an equation of the form
	\[
	f|T_{p} = \sum_{i=1}^{m}p^{-1}\lambda_{p}(g_{i})f_{i} + \tilde{f}
	\]
	for some weakly holomorpic modular form $\tilde{f} \in M_{1/2,\rho_{N}}^{!}$, where $\lambda_{p}(g_{i})$ is the Hecke eigenvalue of $\xi_{1/2}f_{i} = g_{i}$. From the fact that $f$ (hence also $f|T_{p}$) and all $f_{i}$ are orthogonal to cusp forms and have algebraic principal parts, and the $\lambda_{p}(g_{i})$ are algebraic, we see that $\tilde{f}$ is orthogonal to cusp forms and has algebraic principal part as well. Hence $\tilde{f}$ has algebraic coefficients by Proposition~\ref{proposition rational basis}. Thus we obtain for every $p$ with $(p,N) = 1$ an equation of the form
	\[
	f^{(p)} = \sum_{i=1}^{m}\lambda_{p}(g_{i})f_{i}
	\]
	where $f^{(p)} = p (f|T_{p}-\tilde{f})$ is a harmonic Maass whose coefficients $a_{f^{(p)}}^{+}(n^{2},n)$ are algebraic for all $n \in \Z$. For fixed $n \in \Z$, we consider the linear system obtained by looking at the coefficient of the holomorphic part of index $(n^{2},n)$ in all of these equations, where we view the values $a_{f_{i}}^{+}(n^{2},n)$ with $1 \leq i \leq m$ as variables. The system has algebraic coefficients and is solvable, hence it has an algebraic solution. Since the $g_{i}$ lie in different Hecke eigenspaces, the solution is unique, therefore algebraic. This shows that $a_{f_{i}}^{+}(n^{2},n)$ is algebraic for all $n \in \Z$. The converse direction of the proposition is clear.
\end{proof}

\textit{Proof of Theorem \ref{finiteness theorem}.} We prove that item (4) implies item (5) in Theorem~\ref{finiteness theorem}. The converse implication is similar but simpler. Therefore, let us suppose from now on that the coefficients $a_{f}^{+}(n^{2},n)$ of $f$ are algebraic for all $n \in \Z$.  By the last proposition we can assume without loss of generality that $\xi_{1/2}f$ is a simultaneous Hecke eigenform. Hence there exists a newform $g \in S_{3/2,\overline{\rho}_{N/m}}^{\new}$ for some $m \mid N$ such that
\[
\xi_{1/2}f = \sum_{\substack{d,\ell > 0 \\ d^{2}\ell = m}}a_{d,\ell}g|U_{d}V_{\ell} 
\]
for some $a_{d,\ell} \in \C$. By Theorem~\ref{theorem bruinierono2} we find some $f_{g} \in H_{1/2,\rho_{N/m}}^{+}$ with principal part in the number field generated by the eigenvalues of $g$ such that $\xi_{1/2}f_{g} = \|g\|^{-2}g$, and
\begin{align}\label{eq splitting f}
f = \sum_{\substack{d,\ell > 0 \\ d^{2}\ell = m}}\overline{a}_{d,\ell}\|g\|^{2}\sqrt{\ell}f_{g}|U_{d}V_{\ell} + \tilde{f},
\end{align}
with some $\tilde{f} \in M_{1/2,\rho_{N}}^{!}$. By subtracting a suitable linear combination of unary theta functions of weight $1/2$ we can assume that $f_{g}$ is orthogonal to cusp forms and satisfies $a_{f_{g}}^{+}(0,0) = 0$. As in the proofs of the lemmas above we see that the values $\overline{a}_{d,\ell}\|g\|^{2}\sqrt{\ell}$ and all the coefficients of $\tilde{f}$ are algebraic. Let $d_{0}$ be the smallest positive integer with $d_{0}^{2}\mid m$ and $a_{d_{0},\ell_{0}} \neq 0$. We can assume that such a $d_{0}$ exists since otherwise $\xi_{1/2}f = 0$ and item (5) is trivially satisfied. Note that the Fourier expansion of $f_{g}|U_{d}V_{\ell}$ for $d > d_{0}$ is supported on indices $(n,\gamma)$ with $d^{2} \mid n$ and $d \mid \gamma$. In particular, the coefficient of index $(d_{0}^{2},d_{0})$ of the holomorphic part of
\[
\sum_{\substack{d,\ell > 0 \\ d^{2}\ell = m}}\overline{a}_{d,\ell}\|g\|^{2}\sqrt{\ell}f_{g}|U_{d}V_{\ell}
\]
equals the corresponding coefficient of
\[
\overline{a}_{d_{0},\ell_{0}}\|g\|^{2}\sqrt{\ell_{0}}f_{g}|U_{d_{0}}V_{\ell_{0}},
\]
which is algebraic by equation \eqref{eq splitting f} and the algebraicity of the correpsonding coefficient of $f-\tilde{f}$. Since $\overline{a}_{d_{0},\ell_{0}}\|g\|^{2}\sqrt{\ell_{0}} \neq 0$ is algebraic, we find that
\[
a_{f_{g}|U_{d_{0}}V_{\ell_{0}}}^{+}(d_{0}^{2},d_{0}) = a_{f_{g}}^{+}(1,1)
\]
is algebraic as well, where the equation follows from the explicit actions of $U_{d}$ and $V_{\ell}$. By Theorem~\ref{theorem bruinierono2} this implies that $L'(G,1) = 0$ for the newform $G \in S_{2}^{\new}(N/m)$ corresponding to $g$ under the Shimura correspondence. This concludes the proof of Theorem~\ref{finiteness theorem}.

\bibliography{references}{}

\def\cprime{$'$}
\begin{thebibliography}{10}

\bibitem{borcherds}
Richard~E. Borcherds.
\newblock Automorphic forms with singularities on {G}rassmannians.
\newblock {\em Invent. Math.}, 132(3):491--562, 1998.

\bibitem{bruinierhabil}
Jan~H. Bruinier.
\newblock {\em Borcherds products on {O}(2, {$l$}) and {C}hern classes of
  {H}eegner divisors}, volume 1780 of {\em Lecture Notes in Mathematics}.
\newblock Springer-Verlag, Berlin, 2002.

\bibitem{bruinierconversetheorem}
Jan~H. Bruinier.
\newblock On the converse theorem for {B}orcherds products.
\newblock {\em J. Algebra}, 397:315--342, 2014.

\bibitem{bruinierfreitag}
Jan~H. Bruinier and Eberhard Freitag.
\newblock Local {B}orcherds products.
\newblock {\em Ann. Inst. Fourier}, 51:1--27, 2001.

\bibitem{bruinierfunke04}
Jan~H. Bruinier and Jens Funke.
\newblock On two geometric theta lifts.
\newblock {\em Duke Math. J.}, 125(1):45--90, 2004.

\bibitem{bruinierfunkeinjectivity}
Jan~H. Bruinier and Jens Funke.
\newblock On the injectivity of the {K}udla-{M}illson lift and surjectivity of
  the {B}orcherds lift.
\newblock In {\em Moonshine -- the first quarter century and beyond, Eds.: J.
  Lepowski, J. McKay, M. Tuite}, pages 12--39. Cambridge University Press,
  2010.

\bibitem{bruinierono}
Jan~H. Bruinier and Ken Ono.
\newblock Heegner divisors, {$L$}-functions and harmonic weak {M}aass forms.
\newblock {\em Ann. of Math. (2)}, 172(3):2135--2181, 2010.

\bibitem{bruinierschwagenscheidt}
Jan~H. Bruinier and Markus Schwagenscheidt.
\newblock Algebraic formulas for the coefficients of mock theta functions and
  {W}eyl vectors of {B}orcherds products.
\newblock {\em J. Algebra}, 478:38--57, 2017.

\bibitem{eichlerzagier}
Martin Eichler and Don Zagier.
\newblock {\em The theory of {J}acobi forms}, volume~55 of {\em Progress in
  Mathematics}.
\newblock Birkh\"auser Boston Inc., Boston, MA, 1985.

\bibitem{heimmurase}
Bernhard Heim and Atsushi Murase.
\newblock A characterization of holomorphic {B}orcherds lifts by symmetries.
\newblock {\em Int. Math. Res. Not.}, 21:11150--11185, 2015.

\bibitem{mcgraw}
William~J. McGraw.
\newblock The rationality of vector valued modular forms associated with the
  {W}eil representation.
\newblock {\em Math. Ann.}, 326(1):105--122, 2003.

\bibitem{scholl}
Anthony~J. Scholl.
\newblock Fourier coefficients of {E}isenstein series on non-congruence
  subgroups.
\newblock {\em Math. Proc. Camb. Phil. Soc.}, 99:11--17, 1986.

\bibitem{skoruppazagier}
Nils-Peter Skoruppa and Don Zagier.
\newblock Jacobi forms and a certain space of modular forms.
\newblock {\em Invent. Math.}, 94(1):113--146, 1988.

\bibitem{waldschmidt}
Michel Waldschmidt.
\newblock {\em Nombres transcendants et groupes alg\'ebriques}, volume 69--70
  of {\em Ast\'erisque}.
\newblock Soci\'et\'e Math\'ematique de France, Paris, 1979.

\bibitem{wuestholz}
Gisbert W\"ustholz.
\newblock {Z}um {P}eriodenproblem.
\newblock {\em Invent. Math.}, 78(3):381--391, 1984.

\end{thebibliography}
\bibliographystyle{plain}

\end{document}